\documentclass[draft]{amsart}
\usepackage{amssymb,latexsym,amscd, amsthm, epsfig,amsmath,amssymb}
\usepackage{color}
\usepackage{xypic}
\usepackage[all]{xy}
\usepackage[T1]{fontenc}
\usepackage[latin1]{inputenc}
\usepackage[all]{xy}
\usepackage{verbatim}
\usepackage{amsfonts,amsthm}
\usepackage[twoside=false,left=3.3cm,right=3.3cm,top=3cm,bottom=3cm]{geometry}
\usepackage{cancel}
\usepackage{graphicx}
\usepackage{hyperref}
\usepackage{ bbold }
%%% TO RESTATE THEOREMS
\usepackage{thmtools}
\usepackage{thm-restate}
\usepackage{cleveref}
% THEOREM Environments ---------------------------------------------------
 \newtheorem{thm}{Theorem}[section]

 \newtheorem{prop}[thm]{Proposition}
 
 \newtheorem{choice}{Choice}
 
 \theoremstyle{definition}
 \newtheorem{dfnt}[thm]{Definition}
  \theoremstyle{digression}
 
 \theoremstyle{remark}
 \newtheorem{rem}[thm]{Remark}
 \theoremstyle{definition}

\newtheoremstyle{theorem}{\topsep}{\topsep}
{\slshape}{}{\bf}{{\normalfont.}}{.5em}{}

\newtheorem*{maintheorem*}{Main Theorem}

\usepackage{xcolor}

%%%%%%\newtheorem{namedthm*}[thm]{\namedthmname}
%%%%%%\newenvironment{namedthm}[1]
%%%%%%  {\newcommand\namedthmname{#1}\begin{namedthm*}}
%%%%%%  {\end{namedthm*}}
% Seguono miei comandi
%\newtheoremstyle{definition}{\topsep}{\topsep}
%{\normalfont}{}{\bf}{{\normalfont.}}{.5em}{}
%\def\namefont#1{{\sc #1}}
%\swapnumbers
%\theoremstyle{theorem}
%\newtheorem{teo}{\namefont{Theorem}}[section]
%\newtheorem{lem}[teo]{\namefont{Lemma}}
%\newtheorem{prop}[teo]{\namefont{Proposition}}
%\newtheorem{cor}[teo]{\namefont{Corollary}}
%\theoremstyle{definition}
%\newtheorem{dfnt}[teo]{\namefont{Definition}}
%\newtheorem{notation}[teo]{\namefont{Notation}}
%\newtheorem{rem}[teo]{\namefont{Remark}}
%\newtheorem{example}[teo]{\namefont{Example}}

\def\blu#1{\textcolor[rgb]{0,0,1}{#1}}

\newcommand{\pb}{\ar@{}[dr]|(.2){\blu{\diagup}}}
\newcommand{\po}{\ar@{}[ul]|(.2){\blu{\diagup}}}

\def\pr{\ensuremath{\mathop{\textrm{\normalfont pr}}}}

\def\id{\ensuremath{\mathop{\textrm{\normalfont id}}}}
\def\Id{\ensuremath{\mathop{\textrm{\normalfont Id}}}}

%%%%%%%%%%%%%%%%%%%%%%%%%%%%%%%%%%%%%%%%%%%%%%%%%%%%%%%%%%%%%%%%%%%

%opening
\title[A looping-delooping adjunction for topological spaces]{A looping-delooping adjunction for topological spaces}

\author[M. Rovelli]{Martina Rovelli}
\address[M. Rovelli]{Department of Mathematics, EPF Lausanne, Switzerland}
\email{martina.rovelli@epfl.ch}
\begin{document}
\maketitle

\begin{abstract}
Every principal $G$-bundle over $X$ is classified up to equivalence by a homotopy class $X\rightarrow BG$, where $BG$ is the classifying space of $G$. On the other hand, for every nice topological space $X$ Milnor constructed a strict model of its loop space $\tilde\Omega X$, that is a group. Moreover the morphisms of topological groups $\tilde{\Omega}X\rightarrow G$ generate all the $G$-bundles over $X$ up to equivalence.

In this paper, we show that the relation between Milnor's loop space and the classifying space functor is, in a precise sense, an adjoint pair between based spaces and topological groups in a homotopical context.

This proof leads to a classification of principal bundles over a fixed space, that is dual to the classification of bundles with a fixed group. Such a result clarifies the deep relation that exists between the theory of bundles, the classifying space construction and the loop space, which are very important in topological $K$-theory, group cohomology, and homotopy theory.
\end{abstract}
\tableofcontents

\section*{Introduction}
Given a topological group $G$ and a CW-complex $X$, a classical result (\cite{dold}, \cite{steenrod}) states that every $G$-bundle $E$ over $X$ is classified up to equivalence by the homotopy class of a map $f:X\rightarrow BG$, so that $f^{\star}EG\simeq E$. Here, the universal bundle $EG$ is a contractible $G$-bundle over the classifying space $BG$. An explicit description of $EG$ can be found in \cite{milnor2}. The bijection
\begin{equation}
\label{1}
{}_X\mathcal Bun_G/_{\simeq}\cong\mathcal Top(X,BG)/_{\simeq}
\end{equation}
suggests that the classifying space construction $B:\mathcal Gp(\mathcal Top)\rightarrow\mathcal Top$ could behave like a right adjoint in a homotopical context. Moreover, $B$ is in fact a functor, and the bijection (\ref{1}) is natural in $X$ and $G$.

%%%Dualising the picture, the search for a right adjoint to $B$ should be expressed in terms of a classification of principal bundles over a fixed pointed space. 
For a topological group $G$, the correspondence (\ref{1}) witnesses the fact that $BG$ classifies principal $G$-bundles. Dualising the picture, it is natural to ask whether there exists an analougous classification for principal bundles over a fixed base space $X$. The answer was partially given by Milnor in \cite{milnor1}. For any based topological space $X$ with a good cellular decomposition he constructed a principal bundle $\tilde PX$ over $X$ whose structure group is denoted $\tilde\Omega X$. Such a bundle generates all the other principal bundles over $X$ up to equivalence, giving rise to a surjection of the form
\begin{equation}
\label{2}
\mathcal Gp(\mathcal Top)(\tilde\Omega X,G)\twoheadrightarrow{}_X\mathcal Bun_G/_{\simeq}.
\end{equation}
This assignment is dual in a precise way to the assignment (\ref{1}).

The bundle $\tilde PX$ is a strictification of the path space of $X$, so that concatenation becomes associative. The corresponding loop space $\tilde\Omega X$ is a group, and the fibration $\tilde PX\rightarrow X$ is a principal bundle. Following the intuition that $\tilde\Omega X$ is a loop space, the classifying map $\tilde\Omega X\rightarrow G$ for a $G$-bundle over $X$ is the connecting map that appears in the (dual) Nomura-Puppe sequence.

In this paper, we refine the bijection (\ref{1}), showing that every $G$-bundle over $X$ is classified by a \emph{pointed} map $X\rightarrow BG$, and we compute the kernel of the surjection (\ref{2}), i.e., we determine when two morphisms $\tilde\Omega X\rightarrow G$ produce equivalent bundles. This leads to the notion of \emph{algebraic equivalence}, denoted $\equiv$, of morphisms of topological groups. Although there is no natural extension of $\tilde\Omega$ to a functor, we propose a definition of $\tilde\Omega$ on arrows. The assignment is pseudofunctorial, according to the relation of algebraic equivalence, while the bijection obtained by (\ref{2}) is natural with respect to $X$ and $G$. This is summed up in the Main Theorem, whose precise statement appears as Theorem \ref{adjunction}.
\begin{maintheorem*}
\emph{
For every nice connected based space $X$ and nice topological group $G$, there are natural bijections
$$\mathcal Gp(\mathcal Top)(\tilde\Omega X,G)/_{\equiv}\cong{}_X\mathcal Bun_G/_{\simeq}\cong\mathcal Top_{\star}(X,BG)/_{\simeq}.$$
In this sense Milnor's loop space $\tilde\Omega$ and the classifying space functor $B$ form an adjoint pair up to homotopy.
}
\end{maintheorem*}
The result describes an intrinsic duality that relates loop spaces, classifying spaces and principal bundles, and gives a complete classification of principal bundles over a fixed space.

The condition of niceness essentially requires a countable CW-decomposition that is compatible with the base point or with the group structure. This restriction is mostly imposed by the construction $\tilde P$. In any case the conditions are relaxed enough to include interesting examples (e.g., all countable discrete groups, $\mathbb R$, and $\mathbb S^1$ as topological groups, and connected countable CW-complexes as based spaces).

Many of the ideas in this paper were inspired by \cite{fh}, where Farjoun and Hess developed the theory of \emph{twisted homotopical categories}. This is a formal framework based on a \emph{looping-delooping} adjunction
$$\Omega:\xymatrix{co\mathcal Mon\ar@<1ex>[r]&\mathcal Mon\ar@<1ex>[l]}:B$$
between suitable categories of monoids and comonoids, where a map $\Omega X\rightarrow G$ (or its adjoint $X\rightarrow BG$) describes a \emph{bundle}.
A $G$-bundle $E$ over $X$ is a twisted version of the tensor product between a comonoid $X$ and a monoid $G$, and always comes with a diagram of the form
$$\xymatrix{G\ar[r]&E\ar[r]&X},$$
that describes the projection on the base space and the inclusion of a distinguished fibre. Examples are given by twisted tensor products in chain complexes and twisted cartesian products in simplicial sets.
\\\ \\
\textbf{Related work.}
It is worth comparing the main result of this paper with the work of Lurie. Applying arguments of higher category theory and topos theory, he showed (\cite[Lemma 7.2.2.11]{lurie}) that there is an adjunction of quasi-categories between the $\infty$-topos of convenient pointed topological spaces and the $\infty$-topos of convenient topological groups. His formal framework allows a formulation of the result in term of a strict adjunction in the environment of quasi-categories.

Most likely realizing the $\pi_0$-statement in spaces would give a result close to ours, though Lurie's equivalence cannot be proved by using the argument in presented in this article. In fact, we already have trouble trying to see the loop functor as a simplicial map. We could try to assign to each pointed space (respectively pointed continuous map, respectively homotopy) a topological group (respectively a continuous homomorphism, respectively an equivalence of continuous homomorphisms). The problem is that the way we produce these correspondences often involves either a choice or the solution of a lifting problem. This issue cannot be overcome with the language of enriched categories either. On the other hand, the argument presented below uses more elementary tools, and it explains clearly how the looping-delooping adjunction is related to the theory of principal bundles. It also includes the description of the \emph{kernel} of the classification of bundles with a fixed structure group.
%%% (although not always easy to have an intuition for).

A second related article is \cite{lashof} by Lashof. He assigned to each equivalence class of $G$-bundles over $X$ a continuous groupoid morphism $\Omega X\longrightarrow G$ up to \emph{conjugacy equivalence}, in a bijective way. The groupoid structure defined on the loop space does not seem to be directly comparable to any of the known algebraic structures on the loop spaces. Also, the groupoid map is not obtained as a restriction of a map between the total objects. However the feeling is that the final classification presented in our paper is similar to his: in both cases a loop object is though of as a classifier.\\\ \\
\textbf{Future directions.}
We describe briefly here applications of the result in this article that we will elaborate in forthcoming articles.

%%%We briefly explain here how the looping-delooping adjunction can be used to produce exact sequences that study the kernel and the cokernel of 
It is possible to define a category $\mathcal Bun$ of bundles that includes fully faithfully pointed spaces and topological groups as degenerate cases.
Given an abelian nice topological group $H$, e.g., $H:=\mathbb Z,\mathbb Z_n,\mathbb S^1$, we can define an equivalence relation $\sim$ on $\mathcal Bun(E,H)$ for any bundle $E$. In particular, for any topological group $G$ and pointed space $X$ the quotients are
$$\mathcal Bun(G,H)/_{\sim}=\mathcal Gp(\mathcal Top)(G,H)/_{\equiv}\text{\quad and \quad}\mathcal Bun(X,H)/_{\sim}=\{\star\}.$$
Moreover, the multiplication of $H$ induces on $\mathcal Bun(E,H)/_{\sim}$ the structure of an abelian group.
We then define a second equivalence relation $\approx$ on $\mathcal Bun(E,BH)$. Since $BH$ is an abelian H-space, the quotient is an abelian group. For degenerate cases the relation gives now
$$\mathcal Bun(G,BH)/_{\approx}=\{\star\}\text{\quad and \quad}\mathcal Bun(X,BH)/_{\approx}=\mathcal Top_{\star}(X,BH)/_{\simeq}\cong\mathcal Gp(\mathcal Top)(\tilde\Omega X,H)/_{\equiv}.$$

Now, given any $G$-bundle $E$ over $X$, we can write a sequence in $\mathcal Bun$ using the two classifying maps of $E$:
$$\xymatrix{\tilde\Omega B\ar[r]&G\ar[r]&E\ar[r]&X\ar[r]&BG}.$$
By applying $\mathcal Bun(-,H)/_{\sim}$ and $\mathcal Bun(-,BH)/_{\approx}$ we obtain two sequences of groups, which can be connected by using the Main Theorem:
$$\tiny\xymatrix@C=1.3pc{\{\star\}\ar[r]&\{\star\}\ar[r]&\mathcal Bun(E,H)/_{\sim}\ar[r]&\mathcal Gp(\mathcal Top)(G,H)/_{\equiv}\ar[r]\ar[d]_-B&\mathcal Gp(\mathcal Top)(\tilde\Omega X,H)/_{\equiv}
\ar[d]_-{\cong}&&&\\
&&&\mathcal Top_{\star}(BG,BH)/_{\simeq}\ar[r]&\mathcal Top_{\star}(X,BH)/_{\simeq}\ar[r]&\mathcal Bun(E,BH)/_{\approx}\ar[r]&\{\star\}\ar[r]&\{\star\}.}
$$
There is work in progress to prove that the sequence of abelian groups
$$\xymatrix@C=1.2pc{\{\star\}\ar[r]&\mathcal Bun(E,H)/_{\sim}\ar[r]&\mathcal Gp(\mathcal Top)(G,H)/_{\equiv}\ar[r]&\mathcal Top_{\star}(X,BH)/_{\simeq}\ar[r]&\mathcal Bun(E,BH)/_{\approx}\ar[r]&\{\star\}}
$$
is exact.

When $H=K(A;n)$ is an Eilenberg-MacLane space, the quotient $\mathcal Gp(\mathcal Top)(G,H)/_{\equiv}$ is a subgroup of the group cohomology $H^{n+1}_{\mathcal Gp}(G;A)$ of $G$ with coefficients in $A$. We know examples when they actually coincide for small values of $n$, e.g., $H:=\mathbb Z,\mathbb Z_2,\mathbb S^1$. If this holds, the sequence can be written as
$$\xymatrix{0\ar[r]&\mathcal Bun(E,H)/_{\sim}\ar[r]&H^{n+1}_{\mathcal Gp}(G;A)\ar[r]&H^{n+1}(X;A)\ar[r]&\mathcal Bun(E,BH)/_{\approx}\ar[r]&0},$$
and gives a description of the kernel and the cokernel of the \emph{characteristic map} of $E$, which is used in the literature to define \emph{characteristic classes} of bundles (\cite{ms}).
% definition pag 69
The exactness of this sequence contains a lot of geometric information. For instance, when $H:=\mathbb Z_2$, $X$ is a real $k$-manifold and $E$ is its frame bundle, we recover the classical description of the first Stiefel-Whitney class as obstruction to orientability.
%%%$G:=O(k)$, $X:=M$ a $k$-manifold, $E$ the frame bundle of $M$, $n:=1$, this recovers the following classical fact.
%%%\newtheorem*{thm*}{Theorem}
%%%\begin{thm*}
%%%\emph{
%%%A real manifold is orientable if and only if the first Stiefel-Whitney class vanishes.}
%%%\end{thm*}

A dual formalism can be used to produce an exact sequence of groups, except for the last entry $\mathcal Bun(\Omega\mathbb S^n,E)/_{\approx}$, which is only a pointed set,
$$\xymatrix@C=1.2pc{\{\star\}\ar[r]&\mathcal Bun(\mathbb S^n,E)/_{\sim}\ar[r]&\pi_n(X)\ar[r]&\pi_{n-1}(G)\ar[r]&\mathcal Bun(\Omega\mathbb S^n,E)/_{\approx}\ar[r]&\{\star\}}
,$$
that describes the kernel and the cokernel of the connecting map in the \emph{homotopy long exact sequence} induced by $E$ (\cite[Theorem 3.1.5]{piccinini}).
\\\ \\
\textbf{Acknowledgements.}
%\subsection*{Acknowledgements}
I am grateful to my supervisor, Kathryn Hess, for her kind availability and her careful reading of this paper.
I would also like to thank Marc Stephan for many helpful discussions. Finally, I appreciate greatly the referee's constructive suggestions for the structure of this paper.
\section{Principal bundles}
\label{preliminaries}
In this section we recall the notion of principal bundle and two useful constructions: pullback and pushforward. They allow us to compare bundles with different structure groups or different base spaces.
\ \\
\textbf{Tensor product of modules.}
Let $G$ be a topological group, and let $\mathcal Mod_G$ and ${}_G\mathcal Mod$ denote the categories of right and left $G$ modules. Given $M$ a right $G$-module and $M'$ a left $G$-module, the \textbf{tensor product} $M\otimes_GM'$ is defined to be the coequalizer in $\mathcal Top$ of
$$\xymatrix{M\times G\times M'\ar@<1ex>[rr]^-{\mu\times M'}\ar[rr]_-{M\times \mu'}&&M\times M'}.$$
Thus, $M\otimes_GM'={M\times M'}/{\sim}$, where the relation is generated by the pairs $(m\cdot g,m')\sim(m,g\cdot m')$.
This construction extends to a bifunctor
$$-\otimes_G-:\mathcal Mod_{G}\times_G\mathcal Mod\longrightarrow\mathcal Top,$$
which is associative in the obvious sense.

Every morphism $a:G\rightarrow G'$ of topological groups induces an adjunction
$$a_{\star}:=-\otimes_GG':\xymatrix{\mathcal Mod_G\ar@<1ex>[r]&\mathcal Mod_{G'}\ar@<1ex>[l]}:U_a$$
between the \textbf{pushforward} $a_{\star}$ and the forgetful functor.
%which also restricts appropriately to bimodules.
For any left $G$-module $M$, we denote the unit of this adjunction
$$\eta_a:M\longrightarrow a_{\star}M.$$
\ \\
\textbf{Cotensor product of comodules.}
Let $X$ be a topological space, and let $co\mathcal Mod_X$ and ${}_Xco\mathcal Mod$ denote the categories of right and left $X$-comodules. Note that, since we are working in a cartesian category, a right or left comodule $M$ over $X$ is equivalent to a map $\pi:M\rightarrow X$.

Given $M$ a (right) $X$-comodule and $M'$ a (left) $X$-comodule, the \textbf{cotensor product} $M\times_XM'$ is defined to be the equalizer in $\mathcal Top$ of
$$\xymatrix{M\times M'\ar@<1ex>[rr]^-{(M,\pi)\times M'}\ar[rr]_-{M\times(\pi',M')}&&M\times X\times M'}.$$
Thus, $M\times_XM'$ is just the pullback of $\pi$ and $\pi'$.
This construction extends to a bifunctor
$$-\times_X-:co\mathcal Mod_X\times{}_Xco\mathcal Mod\longrightarrow\mathcal Top$$
that is associative in the obvious sense.

Every continuous map $f:X'\rightarrow X$ of spaces induces an adjunction
$$U_f:\xymatrix{{}_{X'}co\mathcal Mod\ar@<1ex>[r]&{}_Xco\mathcal Mod\ar@<1ex>[l]}:X'\times_X-=:f^{\star}$$
between the forgetful functor and the \textbf{pullback} $f^{\star}$.
For any $X$-module $N$, we denote the counit
$$\epsilon_f:f^{\star}N\longrightarrow N.$$
\ \\
\textbf{Mixed modules.}
\label{mixedmodules}
Let $G$ be a topological group and $X$ a topological space. A \textbf{mixed module} over $X$ and $G$ is a topological space $M$ endowed with right $G$-module and (left) $X$-comodule structures such that the action is fibrewise, i.e., $\pi(m\cdot g)=\pi(m)$ for any $g\in G$ and $m\in M$.
A \textbf{morphism of mixed modules over $X$ and $G$} is a continuous map that is a $G$-equivariant morphism of spaces over $X$. An isomorphism of mixed modules over $X$ and $G$ is called an \textbf{equivalence}. We denote by $_X\mathcal Mix_G$ the category of mixed modules over $X$ and $G$. The \textbf{trivial mixed module} over $X$ and $G$ is $X\times G$.

For any continuous map $f:X'\rightarrow X$ and homomorphism of topological groups $a:G\rightarrow G'$, the adjunctions above restrict and corestrict to mixed modules and commute:
$$a_{\star}\circ f^{\star}\cong f^{\star}\circ a_{\star}:{}_X\mathcal Mix_{G}\longrightarrow{}_{X'}\mathcal Mix_{G'}.$$
\ \\
\textbf{Principal bundles.}
\label{principalbundle}
Let $G$ be a topological group and $X$ a topological space. A \textbf{principal bundle} with \textbf{structure group} $G$ and \textbf{base space} $X$, or a \textbf{$G$-bundle over $X$}, is a mixed module $P$ over $X$ and $G$ that is \textbf{locally trivial}, i.e., there exists an open covering $\{U_i\}_{i\in I}$ of $X$ and a \textbf{local trivialization} $\{\psi_i\}_{i\in I}$, where
$$\psi_i:P|_{U_i}:=\pi^{-1}(U_i)\longrightarrow U_i\times G$$
is an equivalence of mixed modules over $U_i$ and $G$. The trivial module $X\times G$ is a bundle. We denote by $_X\mathcal Bun_G$ the full subcategory of $_X\mathcal Mix_G$ of $G$-bundles over $X$. Every morphism of $G$-bundles over $X$ is in fact an equivalence.

Pullback and pushforward construction restrict and corestrict to principal bundles, i.e., they define assignments
$$(-)^{\star}(-):\mathcal T(X',X)\times{}_X\mathcal Bun_G\rightarrow{}_{X'}\mathcal Bun_G\text\quad{ and }\quad(-)_{\star}(-):\mathcal Gp(\mathcal Top)(G,G')\times{}_X\mathcal Bun_G\rightarrow{}_X\mathcal Bun_{G'}.$$
Using the universal property of pullbacks, it is easy to show that a $G$-equivariant morphism $P\rightarrow E$ between $G$-bundles that induces a morphism $f:X\rightarrow Y$ on the base spaces also induces an equivalence of $G$-bundles over $X$
$$P\simeq f^{\star}E.$$
Similarly, using the fact that the pushforward is a coequaliser, a morphism $Q\rightarrow E$ of bundles over $X$ that is equivariant according to a map $a:H\rightarrow G$ between the structure groups induces an equivalence of $H$-bundles over $X$
$$a_{\star}Q\simeq E.$$
\section{Classification of principal bundles with fixed structure group}
For a topological group $G$, a classification (stated here as Theorem \ref{classificationGbundles}) of $G$-bundles over CW-complexes was proven by Dold (in \cite{dold} for bundles over paracompact spaces, which include CW-complexes) and Steenrod (in \cite{steenrod} for bundles over normal, locally compact and countably compact spaces): every $G$-bundle over a CW-complex $X$ is determined up to equivalence by a homotopy class of maps from $X$ into the classifying space $BG$. The two references share the same ideas.

Exploiting the Serre model structure on spaces, we improve the classical result by proving a strong universal property of the classifying space of a group in Theorem \ref{generalizeduniversalpropertyclassifyingspace}. This implies the classification mentioned above.
On the other hand, it also allows a variant of the main classification where we take into consideration only continuous maps $X\rightarrow BG$ that are pointed (Theorem \ref{pointedclassificationGbundles}). Such a strong universal property was already proven by Steenrod in the case of bundles over locally finite complexes, as Theorem 19.4 of \cite{steenrod}.

\ \\
We think of the pullback construction as a tool to produce new principal bundles with the same structure group. Given a topological group $G$, one can fix a suitable $G$-bundle $Q$ over some space $X$, and let the map along which we change the base space vary. For a fixed $G$-bundle $Q$ over $Y$, this process is encoded by the following assignment:
$$(-)^{\star}Q:\xymatrix{\mathcal Top(X,Y)\ar[r]&_X\mathcal Bun_G},$$
where $[f:X\rightarrow Y]\mapsto f^{\star}Q$. This map is not surjective in a strict sense, but the goal is to capture all the $G$-bundles for a fixed group $G$ \emph{up to equivalence}. Therefore, the function we are really interested in is
$$(-)^{\star}Q:\xymatrix{\mathcal Top(X,Y)\ar[r]&_X\mathcal Bun_G\ar@{->>}[r]&_X\mathcal Bun_G/\simeq},$$
or its pointed version
$$(-)^{\star}Q:\xymatrix{\mathcal Top_{\star}(X,Y)\ar@{^{(}->}[r]&\mathcal Top(X,Y)\ar[r]&_X\mathcal Bun_G\ar@{->>}[r]&_X\mathcal Bun_G/\simeq},$$
for a fixed bundle $Q$.
There is a part of the kernel that does not depend on $Q$ and $Y$, which is a consequence of the \emph{Covering Homotopy Theorem} (\cite[Theorem 7.8]{dold}, or \cite[Theorem 11.3]{steenrod}).
\begin{prop}\cite[Corollary 7.10]{dold}, \cite[Theorem 11.5]{steenrod}.
\label{homotopicmapsgiveequivalentbundles}
For any $G$-bundle $Q$ over a CW-complex $Y$, its pullbacks via homotopic maps $X\rightarrow Y$ give equivalent $G$-bundles over $X$, i.e.,
$$f\simeq g:X\longrightarrow Y\quad\Longrightarrow\quad f^{\star}Q\simeq g^{\star}Q.$$
\end{prop}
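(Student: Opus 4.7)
The plan is to reduce, by functoriality of the pullback, to the single-bundle fact that restrictions of a bundle over $X\times I$ to the two ends are equivalent, and then to invoke the Covering Homotopy Theorem directly. Concretely, fix a homotopy $H\colon X\times I\to Y$ from $f$ to $g$, and form the $G$-bundle $E:=H^{\star}Q$ over $X\times I$. Writing $i_0,i_1\colon X\hookrightarrow X\times I$ for the boundary inclusions and $\pi\colon X\times I\to X$ for the projection, the identities $H\circ i_0=f$ and $H\circ i_1=g$, together with the functoriality observations for the pullback recorded in Section \ref{preliminaries}, yield $f^{\star}Q\simeq i_0^{\star}E$ and $g^{\star}Q\simeq i_1^{\star}E$. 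It therefore suffices to show that $i_0^{\star}E\simeq i_1^{\star}E$ for every $G$-bundle $E$ over $X\times I$.

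For this step, I would invoke the Covering Homotopy Theorem (\cite[Theorem 7.8]{dold}, \cite[Theorem 11.3]{steenrod}) in the following form: every $G$-bundle $E$ over $X\times I$ is equivalent to $\pi^{\star}(i_0^{\star}E)$, i.e.\ it is pulled back from its restriction over the $0$-end. Pulling this equivalence back along $i_1$ and using $\pi\circ i_1=\id_X$ produces
$$i_1^{\star}E\;\simeq\;i_1^{\star}\pi^{\star}(i_0^{\star}E)\;=\;(\pi\circ i_1)^{\star}(i_0^{\star}E)\;=\;i_0^{\star}E,$$
which chains with the earlier two equivalences to give $f^{\star}Q\simeq g^{\star}Q$, as required.

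The main obstacle in this argument — the bundle equivalence $E\simeq\pi^{\star}i_0^{\star}E$ — is precisely the content of the Covering Homotopy Theorem and will be taken as given, in accordance with its citation as a corollary of that theorem in \cite{dold} and \cite{steenrod}. Its proof amounts to extending a local trivialization of $E|_{X\times\{0\}}$ across the $I$-direction; this extension requires paracompactness of $X\times I$, and is the sole point at which the CW hypothesis on $Y$ (together with the implicit niceness of $X$) is genuinely used.
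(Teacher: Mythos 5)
Your argument is correct and is exactly the standard proof of this result: the paper itself offers no proof, simply citing Dold and Steenrod, and your reduction to the Covering Homotopy Theorem via the bundle $H^{\star}Q$ over $X\times I$ (together with $f^{\star}Q\simeq i_0^{\star}E$, $g^{\star}Q\simeq i_1^{\star}E$ and $E\simeq\pi^{\star}i_0^{\star}E$) is the argument those references give. One small correction to your closing remark: in Dold's framework the hypothesis actually used is numerability of $Q$, guaranteed by $Y$ being a CW-complex and hence paracompact, which is then inherited by the pullback $H^{\star}Q$ over $X\times I$ for arbitrary $X$ --- it is not paracompactness of $X\times I$ itself that is needed.
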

%%%\begin{proof}
%%%See \cite{dold}, Corollary 7.10 (or \cite{steenrod}, Theorem 11.5 for the locally compact case).
%%%\end{proof}
So far, this is the best we can deduce without specializing to any particular bundle $Q$. We will now fix a group $G$ and focus on the \emph{universal bundle} of $G$, proposed by Milnor in \cite{milnor2}. He considers the join of infinitely many copies of $G$
$$\tilde EG:=\star_{n=0}^{+\infty}G,$$
which he endows with a right $G$-action induced by right multiplication on every copy of $G$. Motivated by its good properties, the orbit space $\tilde BG$ is often called \textbf{classifying space} of $G$. Moreover, $\tilde BG$ comes with a natural base point, and can be considered as a pointed space.

\begin{thm}\cite[Sections 3,5]{milnor2}
%[Milnor's classifying space]
\label{classifyingspace}
The constructions $\tilde E$ and $\tilde B$ are functorial into spaces (or pointed spaces in the case of $\tilde B$). The space $\tilde EG$ is contractible, and has the structure of a $G$-bundle over $\tilde BG$. Moreover, if $G$ is a countable CW-group, then $\tilde BG$ is a countable CW-complex.
\end{thm}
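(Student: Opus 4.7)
The plan is to follow Milnor's original argument in \cite{milnor2}, organizing the verification into four steps matching the four assertions of the theorem.

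First, I would realize $\tilde EG = \star_{n=0}^{\infty} G$ explicitly as the set of formal convex combinations $\sum_{n \geq 0} t_n g_n$ with $t_n \in [0,1]$, $\sum t_n = 1$, only finitely many $t_n$ nonzero, modulo the relation identifying two such sums whenever they agree on the support $\{n : t_n > 0\}$. The topology is the coarsest one making each coordinate function $t_n : \tilde EG \to [0,1]$ and each partial projection $g_n : t_n^{-1}(0,1] \to G$ continuous. Equip $\tilde EG$ with the diagonal right $G$-action $(\sum t_n g_n) \cdot g := \sum t_n (g_n g)$, and let $\tilde BG$ be the orbit space with basepoint the class of $1 \cdot e$, where $e$ is the identity of $G$. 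Functoriality is then immediate: a continuous homomorphism $a : G \to G'$ induces $\tilde E a$ by coordinatewise application of $a$; equivariance gives the descent to a pointed map $\tilde B a$.

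For contractibility of $\tilde EG$ I would use the classical shift-and-slide argument: first, a homotopy $H_1$ that reparametrizes the coordinates $n \mapsto 2n$, producing a point whose support is disjoint (in the join sense) from a fixed reference element $p \in \tilde EG$; then a second homotopy $H_2$ that linearly rescales the $t$-coordinates to slide along the join segment from the shifted image to $p$. For the bundle structure, freeness of the action follows from freeness of $G$ on each summand of the join. Local triviality is obtained from the coordinate functions: each $t_n$ descends to a continuous $\bar t_n : \tilde BG \to [0,1]$, and on the open set $V_n := \bar t_n^{-1}(0,1]$ the $n$-th group coordinate splits the action, yielding a $G$-equivariant homeomorphism $\pi^{-1}(V_n) \cong V_n \times G$. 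The $V_n$ cover $\tilde BG$ since every point has at least one positive coordinate.

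Finally, when $G$ is a countable CW-group, the finite joins $G^{\star (k+1)}$ inherit CW structures from the standard CW model of the join of CW-complexes, and $\tilde EG$ is the colimit along the closed inclusions, hence carries a CW structure in which the $G$-action is cellular; the quotient $\tilde BG$ then inherits a countable CW structure. The main obstacle I would expect is the careful topological bookkeeping for the infinite join: verifying that the identifications in the colimit are consistent with both the topology and the CW structure, and that the local trivializations are continuous at points where some coordinates vanish. For these details I would appeal to \cite[Sections 3, 5]{milnor2}.
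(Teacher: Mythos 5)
Your proposal is correct and takes essentially the same route as the paper, which offers no proof of this statement at all but defers entirely to \cite[Sections 3,5]{milnor2}; your sketch is a faithful reconstruction of Milnor's argument there (the infinite join with the coarse coordinate topology, the shift-and-slide contraction, trivialization over the sets $\bar t_n^{-1}(0,1]$, and the countable CW structure descending to the quotient). Nothing further is needed.
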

%%%\begin{proof}
%%%See \cite{milnor2}, Sections 3 and 5.
%%%%%%Milnor gives the explicit construction in \cite{milnor2}. More precisely, the author proves the result by the following steps:
%%%%%%\begin{itemize}
%%%%%%\item[>>] \emph{Theorem 3.1\&following remark}: $\tilde EG$ is contractible and it is a $G$-bundle over $X$;
%%%%%%\item[>>] \emph{Theorem 5.1}: $\tilde BG$ can be given a countable CW-structure.\qedhere
%%%%%%\end{itemize}
%%%\end{proof}
The conditions for topological groups and for spaces appearing in the last part of the statement will play a role in Section 3, since \emph{Milnor's loop space} $\tilde\Omega$ will be defined only for spaces that admit a good CW-structure.
\begin{dfnt}[Nice spaces and nice groups]\ 
\begin{itemize}
\item A topological space $X$ is \textbf{nice} if it is connected and admits a countable CW-decomposition. We denote by $\mathcal T$ the category of nice pointed spaces and continuous maps.
\item A pointed topological space $X$ is \textbf{nice} if it is connected and admits a countable CW-decomposition such that the base point is a vertex. We denote by $\mathcal T_{\star}$ the category of nice pointed spaces and pointed continuous maps.
\item A topological group $G$ is \textbf{nice} if it admits the structure of a countable CW-group, i.e., a countable CW-decomposition with respect to which the group structure maps are cellular. We denote by $\mathcal G$ the category of of nice topological groups and topological groups homomorphisms.
\end{itemize}
\end{dfnt}
%%%According to this notation, we will later be interested in the functor
%%%$$\tilde B:\mathcal G\rightarrow\mathcal T_{\star}.$$
%
The following result is new in this generality. In fact, it generalizes Theorem 19.4 of \cite{steenrod}, where Steenrod proves it only for locally finite complexes, and it implies Theorem \ref{universalpropertyclassifyingspace}, which was proven by Dold in \cite{dold}, as Theorem 7.5.
\begin{thm}[Strong universal property of the classifying space]
\label{generalizeduniversalpropertyclassifyingspace}
Let $P$ be a $G$-bundle over a CW-complex $Y$. For each $K\subset Y$ such that $(X,Y)$ is a CW-pair and $\rho:P|_K\rightarrow \tilde EG$ right $G$-equivariant, there exists a right $G$-equivariant morphism
$$\phi:P\longrightarrow\tilde EG$$
that extends $\rho$ to $P$. In particular, the following diagram commutes,
where $r$ and $f$ are the maps induced by $\rho$ and $\phi$ on orbit spaces.
$$\xymatrix{&P|_K\ar[rr]^-{\pi|_K}\ar'[d]^(.5){\rho}[dd]\ar@{^{(}->}[ld]&&K\ar[dd]^-r\ar@{^{(}->}[ld]\\
P\ar[rr]^(.7){\pi}\ar@{-->}[dd]^(.7){\phi}&&Y\ar@{-->}[dd]^(.3){f}&\\
&\tilde EG\ar[rr]|-{\phantom A}&&\tilde BG\\
\tilde EG\ar[rr]\ar@{=}[ru]&&\tilde BG\ar@{=}[ru]&}$$
\end{thm}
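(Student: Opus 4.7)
The plan is to reformulate the extension problem as a lifting problem in the Serre model structure on $\mathcal Top$, and then invoke the standard lifting axiom for a cofibration against an acyclic fibration. First, I would identify right $G$-equivariant maps $P \to \tilde EG$ with sections of the associated fibre bundle $q \colon E := (P \times \tilde EG)/G \to Y$, where $G$ acts diagonally by $(p,e) \cdot g := (p \cdot g, e \cdot g)$ and $q[p,e] := \pi(p)$. The correspondence sends $\phi$ to the section $[p] \mapsto [p, \phi(p)]$; inversely, a section $\sigma$ with $\sigma(\pi(p)) = [q,e]$ determines $\phi(p) := e \cdot g$, where $g$ is the unique element with $p = q \cdot g$ (using the freeness of the $G$-action on $P$). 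Under this translation, $\rho$ becomes a section $s_K \colon K \to q^{-1}(K)$, and producing the desired $\phi$ amounts to extending $s_K$ to a global section $s \colon Y \to E$.

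Second, I would verify that $q$ is an acyclic Serre fibration. Local triviality of $P$ with fibre $G$ yields local triviality of $E$ with fibre $\tilde EG$, via the homeomorphism $(G \times \tilde EG)/G \cong \tilde EG$ sending $[g,e]$ to $e \cdot g^{-1}$. A locally trivial bundle over a CW-complex is a Serre fibration, and by Theorem \ref{classifyingspace} the fibre $\tilde EG$ is contractible, so the long exact sequence of homotopy groups forces $q$ to induce isomorphisms on all $\pi_n$ and hence to be a weak equivalence. Since $(Y,K)$ is a CW-pair, the inclusion $K \hookrightarrow Y$ is a cofibration in the Serre model structure, so the lifting axiom applied to
$$\xymatrix{K \ar[r]^-{s_K} \ar@{^{(}->}[d] & E \ar[d]^-q \\ Y \ar@{=}[r] \ar@{-->}[ur]^-{s} & Y}$$
produces the required section. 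Translating back yields a $G$-equivariant $\phi \colon P \to \tilde EG$ extending $\rho$; the induced map $f \colon Y \to \tilde BG$ on orbit spaces automatically restricts to $r$ on $K$ because $\phi|_{P|_K} = \rho$, so the cube in the statement commutes.

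The step I expect to require the most care is the verification that $q$ is a Serre fibration with contractible fibre: both the local triviality of the associated bundle and the fact that locally trivial bundles over CW-bases are Serre fibrations need to be spelled out. The bijection between equivariant maps and sections must also be set up cleanly enough that the extension problem translates into the lifting problem above without loss of information, but once that dictionary is in place the argument reduces to a single application of the Serre model structure axioms.
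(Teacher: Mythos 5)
Your proposal is correct and follows essentially the same route as the paper: both convert the equivariant extension problem into a section-extension problem for the associated bundle $P\otimes_G\tilde EG\rightarrow Y$ (your $E=(P\times\tilde EG)/G$ is exactly this tensor product, cf.\ Proposition \ref{equivariantmap}), verify that this map is an acyclic Serre fibration using local triviality and the contractibility of $\tilde EG$, and solve the resulting lifting problem against the Serre cofibration $K\hookrightarrow Y$. The only cosmetic difference is that the paper justifies the Serre fibration property via numerability over a paracompact base, and verifies $\phi\circ\iota=\rho$ explicitly through the injectivity of $\iota\otimes_G\tilde EG$, whereas you appeal to the naturality of the maps-to-sections dictionary; both are fine.
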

In the proof we use the following fact.
\begin{prop}{\cite[Proposition 6.1]{mitchell}}.
\label{equivariantmap}
Let $Q$ be a $G$-bundle over $Y$ and $Z$ a right $G$-space. The tensor product $Q\otimes_GZ$ is identified with the orbit space of the right $G$-action on $Q\times Z$ given by $(q,z)\cdot g:=(q\cdot g,z\cdot g)$. There is a bijection
$$\mathcal Mod_G(Q,Z)\cong\Gamma(Y,Q\otimes_GZ)$$
between the set of right $G$-equivariant maps $Q\rightarrow Z$ and the set of sections of the fibration $Q\otimes_GZ\rightarrow Y$, induced by $\pr_1:Q\times Z\rightarrow Q$. The assignment is given by $\phi\mapsto\sigma_{\phi}:Y\rightarrow Q\otimes_GY$, where $\sigma_{\phi}$ is induced by $(Q,\phi):Q\rightarrow Q\times Z$.
\hfill\ensuremath{\Box}
\end{prop}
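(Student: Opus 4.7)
The plan is to translate the extension problem for $G$-equivariant maps into an equivalent section-extension problem via Proposition \ref{equivariantmap}, and then solve it by a single lifting argument against an acyclic fibration in the Serre model structure on $\mathcal{T}$.

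First, I would apply Proposition \ref{equivariantmap} with $Z=\tilde EG$ to reformulate the problem. The proposition gives a bijection between $\mathcal{M}\!od_G(P,\tilde EG)$ and $\Gamma(Y, P\otimes_G\tilde EG)$, and this bijection is manifestly compatible with restriction along an inclusion $K\hookrightarrow Y$. Thus $\rho:P|_K\to\tilde EG$ corresponds to a section $\sigma_\rho:K\to (P\otimes_G\tilde EG)|_K = P|_K\otimes_G\tilde EG$, and producing the desired extension $\phi:P\to\tilde EG$ is equivalent to producing a section $\sigma_\phi:Y\to P\otimes_G\tilde EG$ that restricts to $\sigma_\rho$ over $K$.

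Next, I would identify the fibration $p:P\otimes_G\tilde EG\to Y$ as an acyclic Serre fibration. Since $P$ is locally trivial with structure group $G$, the associated construction $P\otimes_G\tilde EG$ is itself locally trivial with fibre $\tilde EG$; in particular $p$ is a Serre fibration. By Theorem \ref{classifyingspace}, $\tilde EG$ is contractible, so every fibre of $p$ is weakly contractible, and the long exact sequence of a fibration (together with connectedness of $Y$) implies that $p$ is a weak equivalence. Hence $p$ is an acyclic fibration in the Serre model structure on $\mathcal{T}$.

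Finally, since $(Y,K)$ is a CW-pair, the inclusion $K\hookrightarrow Y$ is a cofibration in the Serre model structure, and therefore has the left lifting property against the acyclic fibration $p$. Applied to the square whose upper arrow is $\sigma_\rho$ and whose lower arrow is $\id_Y$, this yields the section $\sigma_\phi:Y\to P\otimes_G\tilde EG$ extending $\sigma_\rho$. Translating back through Proposition \ref{equivariantmap} produces the $G$-equivariant map $\phi:P\to\tilde EG$ extending $\rho$. The commutativity of the two front faces of the displayed diagram is automatic: equivariance of $\phi$ forces it to descend to a continuous map $f:Y\to\tilde BG$ on orbit spaces, and the restriction of $f$ to $K$ must coincide with the map $r$ induced by $\rho$ by the uniqueness of the factorisation through orbit spaces. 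The main obstacle is really the fibration step — verifying that $P\otimes_G\tilde EG\to Y$ is a Serre fibration with contractible fibres — since the rest is a formal consequence of the model-theoretic lifting. Once this bundle is identified as an acyclic fibration, the whole extension problem collapses to a single lifting diagram, which is precisely what upgrades the classical Dold/Steenrod statement (for finite-dimensional or locally finite complexes) to arbitrary CW-pairs.
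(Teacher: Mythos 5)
Your argument does not prove the statement it was assigned: the statement is Proposition \ref{equivariantmap}, the bijection $\mathcal Mod_G(Q,Z)\cong\Gamma(Y,Q\otimes_GZ)$ cited from Mitchell, whereas what you have written is a proof of Theorem \ref{generalizeduniversalpropertyclassifyingspace} (the extension/lifting theorem), and its very first step is to \emph{invoke} Proposition \ref{equivariantmap}. As a proof of the proposition this is circular: the bijection is assumed, never established. (For what it is worth, your lifting argument — the Serre cofibration $K\hookrightarrow Y$ lifted against the acyclic Serre fibration $P\otimes_G\tilde EG\rightarrow Y$ with contractible fibre — is essentially the paper's own proof of Theorem \ref{generalizeduniversalpropertyclassifyingspace}, so it is correct where it belongs; it is simply not the content asked for. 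The paper itself gives no proof of Proposition \ref{equivariantmap}, citing \cite[Proposition 6.1]{mitchell}.)

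What an actual proof of the proposition requires is the following, and none of it appears in your text. For the forward direction, given a $G$-equivariant $\phi:Q\rightarrow Z$, the map $(Q,\phi):Q\rightarrow Q\times Z$ is equivariant for the diagonal right action $(q,z)\cdot g=(q\cdot g,z\cdot g)$, hence descends to orbit spaces to give $\sigma_\phi:Y=Q/G\rightarrow Q\otimes_GZ$, and composing with the map induced by $\pr_1$ shows $\sigma_\phi$ is a section. The inverse direction is where the principal-bundle hypotheses are genuinely used: over a trivializing open $U\subset Y$ one has $(Q\otimes_GZ)|_U\cong U\times Z$ via $[(u,g),z]\mapsto(u,z\cdot g^{-1})$, so a section $\sigma$ determines a continuous map $U\rightarrow Z$, which one propagates equivariantly to define $\phi$ on $Q|_U$; freeness of the action guarantees that each orbit $\sigma(\pi(q))$ contains a unique representative of the form $(q,z)$, so the local definitions agree and the two assignments are mutually inverse, while local triviality gives continuity of the recovered $\phi$ and simultaneously exhibits $Q\otimes_GZ\rightarrow Y$ as a fibre bundle with fibre $Z$ — which is also the justification for calling it a fibration in the statement. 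Without this reconstruction step (uniqueness from freeness, continuity from local triviality), the bijection is not proved.
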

\begin{proof}[Proof of Theorem \ref{generalizeduniversalpropertyclassifyingspace}]
According to Proposition \ref{equivariantmap}, the $G$-equivariant morphism $\rho:P|_K\rightarrow\tilde EG$ determines a section $\sigma_{\rho}$ of $P|_K\otimes_G\tilde EG\longrightarrow K$.
This allows us to draw a commutative square.
$$\xymatrix{&P|_K\otimes_G\tilde EG\ar[rd]|-{\iota\otimes_G\tilde EG}&\\
K\ar@/^1pc/[rr]\ar[ru]^-{\sigma_{\rho}}\ar@{^{(}->}[d]_-{i}&&P\otimes_G\tilde EG\ar@{->>}[d]^-{\sim}\\
Y\ar@{=}[rr]&&Y}$$
The vertical arrow on the left is a Serre cofibration. As for the vertical arrow on the right, it is an acyclic Serre fibration. Indeed, $P\otimes_G\tilde EG$ is a fibre bundle over a paracompact space $Y$, every fibre bundle over a paracompact space is \emph{numerable} (\cite[Section 7]{dold}), and every numerable fibre bundle is a Serre fibration (\cite[Theorem 7.12]{spanier}). Moreover, the fibre is contractible, and a Serre fibration with a contractible fibre is a weak equivalence.
%%%\begin{itemize}
%%%\item it is a \emph{fibre bundle} over a \emph{paracompact space} $X$,
%%%\item every fibre bundle over a paracompact space is \emph{numerable} (see \cite{dold}, Section 7),
%%%\item every numerable fibre bundle is a Serre fibration (see \cite{spanier}, Theorem 7.12),
%%%\item the fibre is contractible, and
%%%\item a Serre fibration with a contractible fibre is a weak equivalence.
%%%\end{itemize}

Thus there exists a lift $\sigma:Y\rightarrow P\otimes_G\tilde EG$:
$$\xymatrix{&P|_K\otimes_GEG\ar[rd]|-{\iota\otimes_GEG}&\\
K\ar@/^1pc/[rr]\ar[ru]^-{\sigma_{\rho}}\ar@{^{(}->}[d]_-{i}&&P\otimes_GEG\ar@{->>}[d]^-{\sim}\\
Y\ar@{=}[rr]\ar@{-->}[rru]^-{\sigma}&&Y}.$$
From the commutativity of the lower triangle we see that the lift $\sigma$ is a section of $P\otimes_G\tilde EG\rightarrow Y$, and therefore corresponds, according to Proposition \ref{equivariantmap}, to a right $G$-equivariant map $\phi:P\rightarrow\tilde EG$ that induces some $f:Y\rightarrow BG$.

Moreover, using the commutativity of the upper part of the diagram,
we have:
$$(\iota\otimes_GEG)\circ\sigma_{\rho}=\sigma\circ i=(\iota\otimes_GEG)\circ\sigma_{\phi\circ\iota},$$
where the second equality is induced at the level of orbits by the displayed commutative diagram of right $G$-equivariant maps
$$\xymatrix{P|_K\ar[d]_-{\iota}\ar[rr]^-{(P|_K,\phi\circ\iota)}&&P|_K\times Q\ar[d]^-{\iota\times Q}&\quad\ar@{}[d]|-{\leadsto}&K\ar[d]_-{i}\ar[rr]^-{\sigma_{\phi\circ\iota}}&&P|_K\otimes_GQ\ar[d]^-{\iota\otimes_GQ}\\
P\ar[rr]_-{(P,\phi)}&&P\times Q&\quad&Y\ar[rr]_-{\sigma_\phi}&&P\otimes_G Q.}$$

Since $\iota\otimes_GEG$ is injective, the sections are equal, that is $\sigma_{\rho}=\sigma_{\phi\circ\iota}$. Thus, using again Proposition \ref{equivariantmap}, we have
$$\rho=\phi\circ\iota\mbox{ and }r=f\circ i.$$
\end{proof}
We use the result above to conclude that the process of pulling back the universal bundle is injective, in the sense of the following proposition.
\begin{prop}
\label{injectivityofpullback}
If the pullbacks of the universal bundle $\tilde EG$ via two maps $f,g:X\rightarrow BG$ are equivalent $G$-bundles over the CW-complex $X$, then $f$ is homotopic to $g$, i.e.,
$$f^{\star}\tilde EG\simeq g^{\star}\tilde EG\quad\Longrightarrow\quad f\simeq g:X\rightarrow\tilde BG.$$
\end{prop}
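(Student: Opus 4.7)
The plan is to reduce the problem to an application of the strong universal property (Theorem \ref{generalizeduniversalpropertyclassifyingspace}) with the CW-pair $(X\times I,\, X\times\{0,1\})$, using the given equivalence to assemble a $G$-equivariant map defined on the top and bottom and then extending it across the cylinder. The induced map on orbit spaces will be the desired homotopy.

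More precisely, let $E:=f^{\star}\tilde EG$, and write $\phi_f:E\to\tilde EG$ for the canonical $G$-equivariant map coming from the pullback, which by construction induces $f$ on orbit spaces. By hypothesis there is an equivalence $\alpha:E\xrightarrow{\simeq} g^{\star}\tilde EG$ of $G$-bundles over $X$; composing $\alpha$ with the canonical pullback map $g^{\star}\tilde EG\to\tilde EG$ yields a second $G$-equivariant map $\phi_g:E\to\tilde EG$, this time inducing $g$ on orbits (since $\alpha$ covers $\id_X$). Next, form the $G$-bundle $E\times I$ over the CW-complex $X\times I$; its restriction to the CW-subcomplex $K:=X\times\{0,1\}$ is canonically $E\sqcup E$, on which we define the $G$-equivariant map
$$\rho:=\phi_f\sqcup\phi_g:(E\times I)|_K\longrightarrow\tilde EG.$$
On orbits this realizes $f$ on $X\times\{0\}$ and $g$ on $X\times\{1\}$.

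Now Theorem \ref{generalizeduniversalpropertyclassifyingspace}, applied to the $G$-bundle $E\times I$ over $X\times I$, the CW-pair $(X\times I, K)$, and the map $\rho$, produces a $G$-equivariant extension $\Phi:E\times I\to\tilde EG$. Passing to orbit spaces yields a continuous map $H:X\times I\to\tilde BG$ with $H|_{X\times\{0\}}=f$ and $H|_{X\times\{1\}}=g$, i.e., a homotopy $f\simeq g$, as required.

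The step I expect to require most care is the verification that the data on $K$ really assembles into a well-defined $G$-equivariant map on $(E\times I)|_K$ inducing $f$ and $g$ on the respective ends; in particular one must check that $\phi_g$, built by pre-composing the canonical map out of $g^{\star}\tilde EG$ with the equivalence $\alpha$, does induce $g$ on orbit spaces. Everything else is routine: $(X\times I, X\times\{0,1\})$ is a CW-pair whenever $X$ is a CW-complex, and $E\times I$ is clearly a $G$-bundle over $X\times I$ with the required restrictions to $X\times\{0\}$ and $X\times\{1\}$.
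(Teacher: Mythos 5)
Your proposal is correct and follows essentially the same route as the paper: both arguments take $E:=f^{\star}\tilde EG$, assemble the $G$-equivariant map $\epsilon_f\sqcup(\epsilon_g\circ\alpha)$ on the two ends $E\sqcup E=(E\times\mathbb I)|_{X\times\{0,1\}}$, and apply Theorem \ref{generalizeduniversalpropertyclassifyingspace} to the bundle $E\times\mathbb I$ over the CW-pair $(X\times\mathbb I,\,X\times\{0,1\})$ to extend over the cylinder, reading off the homotopy on orbit spaces. The point you flag as delicate (that $\epsilon_g\circ\alpha$ induces $g$ on orbits because $\alpha$ covers $\id_X$) is exactly the verification the paper relies on as well.
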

\begin{proof}
Here is essentially the same argument used by Dold (Theorem 7.5) and Steenrod (Theorem 19.3), but presented as an application of Theorem \ref{generalizeduniversalpropertyclassifyingspace}. Let $\alpha:f^{\star}\tilde EG\rightarrow g^{\star}\tilde EG$ be an equivalence of $G$-bundles over $X$.
We define a morphism
$$\varphi:\mathbb 2\times(f^{\star}\tilde EG):=f^{\star}\tilde EG\sqcup f ^{\star}\tilde EG\longrightarrow\tilde EG$$
determined by the summands
$$\epsilon_f:f^{\star}\tilde EG\longrightarrow EG\mbox{ and }\epsilon_g\circ\alpha:f^{\star}\tilde EG\longrightarrow g^{\star}\tilde EG\longrightarrow\tilde EG.$$
It is $G$-equivariant because each summand is, and it induces $f+g:\mathbb 2\times X=X\sqcup X\rightarrow\tilde BG$. The inclusion
$$i\times(f^{\star}P):\mathbb 2\times(f^{\star}P)\longrightarrow\mathbb I\times(f^{\star}P)$$
is also right $G$-equivariant.

All the conditions to apply Theorem \ref{generalizeduniversalpropertyclassifyingspace} (with $P=\mathbb I\times(f^{\star}\tilde EG)$, $Y=\mathbb I\times X$, $K=\mathbb 2\times X$) are satisfied. We thus get a morphism of $G$-bundles $\phi:\mathbb I\times(f^{\star}\tilde EG)\rightarrow\tilde EG$ and a map $F:\mathbb I\times X\rightarrow\tilde BG$.
Such an $F$ is the required homotopy.
\end{proof}

As an immediate consequence of Theorem \ref{generalizeduniversalpropertyclassifyingspace}, taking $K$ to be empty, and Proposition \ref{injectivityofpullback}, we obtain the following classical statements.
\begin{prop}[Universal property of the classifying space]
\label{universalpropertyclassifyingspace}
Let $P$ be a $G$-bundle over a CW-complex $X$. There exists a right $G$-equivariant morphism
$$\phi:P\longrightarrow\tilde EG$$
that induces a map on the base spaces
$$f:X\longrightarrow\tilde BG.$$
As a consequence,
$$P\simeq f^{\star}\tilde EG.$$\hfill\ensuremath{\Box}
\end{prop}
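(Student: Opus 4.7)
The plan is to derive this proposition as an immediate specialization of Theorem \ref{generalizeduniversalpropertyclassifyingspace} with the predesignated subcomplex taken to be empty. Setting $K = \emptyset$ makes the $G$-equivariant extension datum $\rho$ vacuous, so the hypotheses of the theorem reduce exactly to the bare input of the proposition: a $G$-bundle $P$ over a CW-complex $X$. The conclusion of the theorem then directly supplies the right $G$-equivariant morphism $\phi \colon P \to \tilde EG$ together with the induced map $f \colon X \to \tilde BG$ on orbit spaces, which already establishes the first assertion.

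For the final equivalence $P \simeq f^{\star}\tilde EG$, I would invoke the observation recorded at the end of Section \ref{preliminaries}: any $G$-equivariant morphism between $G$-bundles covering a continuous map $f$ of base spaces factors through the canonical projection from the pullback, and since every morphism in $_X\mathcal Bun_G$ is automatically an equivalence, this factorization yields an equivalence $P \simeq f^{\star}\tilde EG$. Applied to the pair $(\phi, f)$ produced in the previous step, this delivers the desired equivalence.

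There is no substantial obstacle, since all of the technical content has already been absorbed into Theorem \ref{generalizeduniversalpropertyclassifyingspace}, whose proof went through lifting a section across the acyclic Serre fibration $P \otimes_G \tilde EG \to X$. The only minor point worth a single sentence of verification is that $(X, \emptyset)$ qualifies as a CW-pair, so that the theorem indeed applies with the empty subcomplex; everything else is bookkeeping.
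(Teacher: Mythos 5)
Your proposal is correct and matches the paper's own route exactly: the paper derives this proposition as an immediate consequence of Theorem \ref{generalizeduniversalpropertyclassifyingspace} with $K=\emptyset$, and the final equivalence $P\simeq f^{\star}\tilde EG$ follows from the observation at the end of Section \ref{preliminaries} that a $G$-equivariant morphism of bundles covering $f$ induces an equivalence with the pullback. Nothing further is needed.
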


\begin{thm}[Classification of $G$-bundles]
\label{classificationGbundles}
Let $X$ be a CW-complex. The pullback of the classifying bundle $\tilde EG$ induces a bijection
$$(-)^{\star}\tilde EG:\mathcal Top(X,\tilde BG)/_{\simeq}\longrightarrow{{}_X\mathcal Bun_G}/_{\simeq}$$
between the homotopy classes of continuous maps $X\rightarrow\tilde BG$ and the equivalence classes of $G$-bundles over $X$.
\hfill\ensuremath{\Box}
\end{thm}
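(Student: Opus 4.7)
The plan is to observe that the theorem is essentially an assembly of the three preparatory results already established, so the proof is a matter of verifying that each of them supplies exactly one of the properties (well-definedness, surjectivity, injectivity) required for the pullback assignment to induce a bijection on equivalence classes.

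First, I would check that the assignment $f \mapsto f^{\star}\tilde EG$ descends to a well-defined function on the homotopy classes of maps $X \to \tilde BG$. This is immediate from Proposition \ref{homotopicmapsgiveequivalentbundles}, which guarantees that $f \simeq g$ implies $f^{\star}\tilde EG \simeq g^{\star}\tilde EG$ as $G$-bundles over $X$. Hence we obtain a function
$$(-)^{\star}\tilde EG \colon \mathcal Top(X,\tilde BG)/_{\simeq} \longrightarrow {}_X\mathcal Bun_G/_{\simeq}.$$

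Next, for surjectivity, given any $G$-bundle $P$ over the CW-complex $X$, I would invoke Proposition \ref{universalpropertyclassifyingspace}: it produces a right $G$-equivariant morphism $\phi \colon P \to \tilde EG$ and hence an induced map $f \colon X \to \tilde BG$ on orbit spaces, together with an equivalence $P \simeq f^{\star}\tilde EG$. Therefore $[P]$ lies in the image of $[f]$, proving the map is onto. For injectivity, I would apply Proposition \ref{injectivityofpullback}: if $f^{\star}\tilde EG \simeq g^{\star}\tilde EG$ as $G$-bundles over $X$, then $f \simeq g$ as continuous maps, so $[f] = [g]$ in $\mathcal Top(X,\tilde BG)/_{\simeq}$.

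There is no real obstacle here, since the genuine content has already been concentrated in the strong universal property of $\tilde BG$ (Theorem \ref{generalizeduniversalpropertyclassifyingspace}), which was the nontrivial ingredient from which both Proposition \ref{universalpropertyclassifyingspace} and Proposition \ref{injectivityofpullback} were deduced. The only point worth mentioning explicitly in the write-up is that the Proposition \ref{universalpropertyclassifyingspace} is applied with $Y = X$ a CW-complex (hence paracompact), so that the use of the Serre model structure inside Theorem \ref{generalizeduniversalpropertyclassifyingspace} is legitimate, and that the homotopy of Proposition \ref{injectivityofpullback} is built by applying Theorem \ref{generalizeduniversalpropertyclassifyingspace} with $Y = \mathbb I \times X$ and $K = \mathbb 2 \times X$, which is again a CW-pair. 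Assembling the three properties yields the asserted bijection.
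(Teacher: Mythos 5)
Your proposal is correct and matches the paper's intended argument exactly: the paper states the theorem with no written proof precisely because it follows immediately by combining Proposition \ref{homotopicmapsgiveequivalentbundles} (well-definedness), Proposition \ref{universalpropertyclassifyingspace} (surjectivity, itself obtained from Theorem \ref{generalizeduniversalpropertyclassifyingspace} with $K=\emptyset$), and Proposition \ref{injectivityofpullback} (injectivity). Your write-up makes explicit the same three-step assembly the paper leaves implicit.
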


By taking $K$ to be the base point of $X$ in Proposition \ref{generalizeduniversalpropertyclassifyingspace}, we can establish based variants of Proposition \ref{universalpropertyclassifyingspace} and Theorem \ref{classificationGbundles}.

\begin{prop}
\label{pointeduniversalpropertyclassifyingspace}
Let $P$ be a $G$-bundle over a
CW-complex $X$ pointed on a vertex.
There exists a right $G$-equivariant morphism
$$\phi:P\longrightarrow\tilde EG$$
that induces a \emph{based} map at the level of base spaces
$$f:X\longrightarrow\tilde BG.$$
As a consequence,
$$P\simeq f^{\star}\tilde EG.$$
\hfill\ensuremath{\Box}
\end{prop}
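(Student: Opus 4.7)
The plan is to apply Theorem \ref{generalizeduniversalpropertyclassifyingspace} with the pair $(Y,K)=(X,\{x_0\})$, where $x_0$ is the base point of $X$. Since $x_0$ is a vertex of $X$ by hypothesis, this is a valid CW-pair, so the hypotheses of the theorem are met.

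First I would define an appropriate $G$-equivariant map $\rho:P|_{\{x_0\}}\to\tilde EG$ on the fiber over the base point. The fiber $P|_{\{x_0\}}$, being a principal $G$-bundle over a point, admits a $G$-equivariant isomorphism $P|_{\{x_0\}}\cong G$ after choosing any element $p_0$ of the fiber (which, by convention or by picking any lift of the base point, we can take to be a distinguished point of $P$). Then I would take $\rho$ to be the composition of this isomorphism with the inclusion $G\hookrightarrow\tilde EG$ of the $0$-th copy of $G$ in the infinite join, which is $G$-equivariant by construction. The resulting induced map $r:\{x_0\}\to\tilde BG$ on orbit spaces sends $x_0$ to the image of $G$ in the orbit space, which is precisely the canonical base point of $\tilde BG$ provided by Theorem \ref{classifyingspace}.

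Next, applying Theorem \ref{generalizeduniversalpropertyclassifyingspace} directly yields a $G$-equivariant extension $\phi:P\to\tilde EG$ of $\rho$ and an induced map $f:X\to\tilde BG$ on base spaces extending $r$. Since $f\circ i=r$ and $r(x_0)$ is the base point of $\tilde BG$, the map $f$ is pointed. Finally, as noted in Section \ref{preliminaries}, any $G$-equivariant morphism of $G$-bundles inducing $f$ on base spaces produces an equivalence $P\simeq f^\star\tilde EG$.

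The only subtle point is ensuring that the choice of $\rho$ really does land on the base point of $\tilde BG$; this is essentially a matter of bookkeeping, using the convention that the base point of $\tilde BG$ is the orbit of the canonical inclusion of $G$ as the $0$-th summand of $\tilde EG=\star_{n=0}^{+\infty}G$. Once $\rho$ is fixed to factor through this inclusion, everything else follows immediately from Theorem \ref{generalizeduniversalpropertyclassifyingspace}; there is no genuine obstacle beyond this observation.
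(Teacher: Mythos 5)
Your proposal is correct and takes essentially the same route as the paper, which obtains this proposition precisely by taking $K$ to be the base point of $X$ in Theorem \ref{generalizeduniversalpropertyclassifyingspace}; you simply spell out the bookkeeping of choosing $\rho$ on the fibre via the inclusion of $G$ as the $0$-th join summand of $\tilde EG$, which the paper leaves implicit.
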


\begin{thm}[\emph{Pointed} classifications of $G$-bundles]
\label{pointedclassificationGbundles}
Let $X$ be a CW-complex based at a vertex. The pullback of the universal bundle $\tilde EG$ induces a bijection
$$(-)^{\star}\tilde EG:\mathcal Top_{\star}(X,\tilde BG)/_{\simeq}\longrightarrow{{}_X\mathcal Bun_G}/_{\simeq}$$
between the (unpointed!) homotopy classes of pointed continuous maps $X\rightarrow\tilde BG$ and the equivalence classes of $G$-bundles over $X$.
\hfill\ensuremath{\Box}
\end{thm}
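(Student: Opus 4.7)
The plan is to assemble the theorem from the three propositions immediately preceding it, noting carefully the role played by the ``unpointed homotopy classes of pointed maps'' phrasing.

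First, I would check well-definedness of the assignment $[f] \mapsto [f^{\star}\tilde EG]$. If $f,g : X \to \tilde BG$ are pointed maps that are homotopic (through an unpointed homotopy), then Proposition \ref{homotopicmapsgiveequivalentbundles}, applied to the nice CW-complex $X$ and the universal bundle $\tilde EG$ over $\tilde BG$, yields $f^{\star}\tilde EG \simeq g^{\star}\tilde EG$ as $G$-bundles over $X$. Thus the map descends to the quotients.

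Next, I would handle surjectivity by invoking Proposition \ref{pointeduniversalpropertyclassifyingspace}: for any $G$-bundle $P$ over a pointed CW-complex $X$ there exists a right $G$-equivariant morphism $\phi: P \to \tilde EG$ whose induced map $f: X \to \tilde BG$ on orbit spaces is pointed, and $P \simeq f^{\star}\tilde EG$. Hence every equivalence class on the right is hit by $[f] \in \mathcal Top_{\star}(X,\tilde BG)/_{\simeq}$.

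Finally, injectivity follows from Proposition \ref{injectivityofpullback}. If $f,g : X \to \tilde BG$ are pointed maps with $f^{\star}\tilde EG \simeq g^{\star}\tilde EG$ as $G$-bundles, that proposition gives an unpointed homotopy $f \simeq g$. This is precisely the equivalence relation on the left-hand side (since we are allowing \emph{unpointed} homotopies between pointed maps), so $[f]=[g]$ in $\mathcal Top_{\star}(X,\tilde BG)/_{\simeq}$.

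The main subtlety, and the only place that requires care, is precisely this last point: the quotient $\mathcal Top_{\star}(X,\tilde BG)/_{\simeq}$ is taken with respect to free homotopies rather than pointed homotopies. Were one to insist on pointed homotopies, injectivity would fail in general, because the argument in Proposition \ref{injectivityofpullback} produces a homotopy $F: \mathbb I \times X \to \tilde BG$ that need not send $\mathbb I \times \{\star\}$ to the basepoint. Apart from this bookkeeping, no new ingredient beyond the previous three propositions is needed.
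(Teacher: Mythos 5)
Your proposal is correct and follows exactly the route the paper intends: the theorem is stated there as an immediate consequence of Proposition \ref{pointeduniversalpropertyclassifyingspace} (surjectivity), Proposition \ref{injectivityofpullback} (injectivity), and Proposition \ref{homotopicmapsgiveequivalentbundles} (well-definedness), which is precisely your decomposition. Your remark that the homotopy produced in Proposition \ref{injectivityofpullback} is unpointed, matching the free-homotopy quotient on $\mathcal Top_{\star}(X,\tilde BG)$, correctly identifies the one point of bookkeeping.
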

The next result is essentially classical.
\begin{prop}[Uniqueness of the classifying space]
\label{uniquenessclassifyingspace}
The classifying space of a topological group $G$ is unique up to homotopy. More precisely, for a CW-complex $B$ pointed on a vertex
the following are equivalent.
\begin{itemize}
	\item[(1)] \emph{Homotopy type}: $B$ has the same homotopy type as $\tilde BG$.
	\item[(2)] \emph{Universal property}: there exists a natural bijection
$$\mathcal Top(X,B)/_{\simeq}\longrightarrow{{}_X\mathcal Bun_G}/_{\simeq}$$
between the homotopy classes of continuous maps $X\rightarrow B$ and the equivalence classes of $G$-bundles over $X$, for every CW-complex $X$.
	\item[(2')] \emph{Pointed universal property}: there exists a natural bijection
	$$\mathcal Top_{\star}(X,B)/_{\simeq}\longrightarrow{{}_X\mathcal Bun_G}/_{\simeq}$$
between the (unpointed!) homotopy classes of pointed continuous maps $X\rightarrow B$ and the equivalence classes of $G$-bundles over $X$, for every CW-complex $X$ based at a vertex.
	\item[(3)] \emph{Contractibility}: $B$ is a classifying space for $G$, i.e., there exists a $G$-bundle $E$ over $B$ that is weakly contractible.
\end{itemize}
\end{prop}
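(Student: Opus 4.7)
The plan is to prove the four conditions are equivalent by a cycle of implications, with $(1)$ serving as the hub: I would show $(1)\Rightarrow(2),(2'),(3)$ directly, and then show each of $(2),(2'),(3)$ implies $(1)$ separately.

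The implication $(1)\Rightarrow(2),(2')$ is essentially formal: a homotopy equivalence $B\simeq\tilde BG$ induces a bijection on (pointed or unpointed) homotopy classes of maps out of any $X$, which one composes with the bijection of Theorem \ref{classificationGbundles} (resp.\ Theorem \ref{pointedclassificationGbundles}) to obtain the required natural correspondence. For $(1)\Rightarrow(3)$, I would pick a homotopy equivalence $g:B\rightarrow\tilde BG$ and define $E:=g^{\star}\tilde EG$. Since $\tilde EG\rightarrow\tilde BG$ is a numerable bundle and therefore a Serre fibration, the induced map $E\rightarrow\tilde EG$ on total spaces fits into a morphism of fibration sequences with identity on the fibre $G$ and homotopy equivalence $g$ on the base; the five lemma applied to the long exact sequences of homotopy groups forces $E\rightarrow\tilde EG$ to be a weak equivalence, so the contractibility of $\tilde EG$ yields weak contractibility of $E$.

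For $(2)\Rightarrow(1)$ (the argument for $(2')\Rightarrow(1)$ is identical in the pointed setting), I would use a Yoneda-style argument. Let $E$ be the $G$-bundle over $B$ corresponding to the class of $\id_B$ under the bijection of (2); naturality in $X$ forces every $f:X\rightarrow B$ to be classified by $f^{\star}E$. Theorem \ref{classificationGbundles} provides $h:B\rightarrow\tilde BG$ with $h^{\star}\tilde EG\simeq E$, while the bijection of (2) applied to $\tilde EG$ provides $g:\tilde BG\rightarrow B$ with $g^{\star}E\simeq\tilde EG$. Computing $(h\circ g)^{\star}\tilde EG\simeq g^{\star}h^{\star}\tilde EG\simeq g^{\star}E\simeq\tilde EG$ and applying the injectivity of Proposition \ref{injectivityofpullback} gives $h\circ g\simeq\id_{\tilde BG}$; a symmetric computation, using the injectivity implicit in (2), gives $g\circ h\simeq\id_B$.

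Finally, for $(3)\Rightarrow(1)$, given a weakly contractible $G$-bundle $E$ over $B$ I would apply Proposition \ref{universalpropertyclassifyingspace} to $E$ to obtain a $G$-equivariant map $E\rightarrow\tilde EG$ covering some $f:B\rightarrow\tilde BG$. Comparing the two long exact sequences of homotopy groups for $G\rightarrow E\rightarrow B$ and $G\rightarrow\tilde EG\rightarrow\tilde BG$ via the naturality of the connecting maps, and using that both $E$ and $\tilde EG$ are weakly contractible, $f$ is forced to be a weak equivalence on all homotopy groups; since $B$ and $\tilde BG$ are CW-complexes, Whitehead's theorem upgrades this to a genuine homotopy equivalence. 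The main subtlety will be in $(2)\Rightarrow(1)$, where one must distil point-set information from a bijection that is defined only up to homotopy and equivalence, and where the injectivity provided by Proposition \ref{injectivityofpullback} is crucial to ensure that the two composites are honestly null-homotopic to the identity rather than merely classifying the correct bundle.
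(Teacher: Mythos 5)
Your proposal is correct, but it decomposes the equivalence differently from the paper. The paper proves the single cycle $(3)\Rightarrow(2')\Rightarrow(2)\Rightarrow(1)\Rightarrow(3)$: the step $(3)\Rightarrow(2')$ is delegated to Proposition \ref{pointeduniversalpropertyclassifyingspace} (whose proof applies verbatim to any weakly contractible numerable $G$-bundle, not just $\tilde EG$), the step $(2')\Rightarrow(2)$ is handled by observing that the unpointed pullback map is injective by Proposition \ref{injectivityofpullback} and surjective because its restriction to pointed classes already is, and $(2)\Rightarrow(1)$ is dispatched as an instance of the Yoneda Lemma in the homotopy category. You instead use $(1)$ as a hub, which costs you more implications but makes each one self-contained: your $(2)\Rightarrow(1)$ spells out explicitly the representability argument the paper leaves implicit (producing $g$ and $h$ and checking both composites via Proposition \ref{injectivityofpullback} and the injectivity of the given bijection), and your $(3)\Rightarrow(1)$ --- comparing the two long exact sequences of the fibrations $G\rightarrow E\rightarrow B$ and $G\rightarrow\tilde EG\rightarrow\tilde BG$ and invoking Whitehead --- replaces the paper's reliance on the pointed universal property with the same five-lemma technique the paper itself uses for $(1)\Rightarrow(3)$. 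Both routes are sound; the paper's is shorter because it reuses the classification theorems wholesale, while yours makes the Yoneda step and the role of weak contractibility more transparent and avoids having to re-examine the proof of Proposition \ref{pointeduniversalpropertyclassifyingspace} for a bundle other than $\tilde EG$. One small point to keep in view: in your $(2)\Rightarrow(1)$ you apply the hypothesis $(2)$ with $X=\tilde BG$ and Proposition \ref{injectivityofpullback} with $X=\tilde BG$ as well, which requires $\tilde BG$ to be a CW-complex; this holds for nice $G$ by Theorem \ref{classifyingspace} and is implicitly assumed throughout the paper.
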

\begin{proof}
\begin{itemize}
\item $[(3)\Longrightarrow(2')]$: See Proposition \ref{pointeduniversalpropertyclassifyingspace}.
\item $[(2')\Longrightarrow(2)]$: By the Yoneda Lemma, a natural bijection as in $(2')$ has to be induced by pulling back a $G$-bundle $E$ over $B$. Consider the function
$$(-)^{\star}(E):\mathcal Top(X,B)/{\simeq}\longrightarrow{{}_X\mathcal Bun_G}/{\simeq}.$$
This map is injective, by Proposition \ref{injectivityofpullback}, and surjective since its restriction to the subset $\mathcal Top_{\star}/_{\simeq}$ is.
\item $[(2)\Longrightarrow(1)]$: It is an application of the Yoneda Lemma. Indeed, $B$ represents the functor $X\mapsto_X\mathcal{B}un_G/{\simeq}$ on the homotopy category of connected spaces that admit the structure of countable CW-complex.
\item $[(1)\Longrightarrow(3)]$: Let $f:B\rightarrow \tilde BG$ be a homotopy equivalence, and consider the pullback $E:=f^{\star}\tilde EG$ of Milnor's model. It comes with the counit $\epsilon_f:E=f^{\star}\tilde EG\rightarrow\tilde EG$, which is a morphism of $G$-bundles.
There is a morphism of long exact sequences with identities on the component of the fibres and isomorphisms on the components of the base space. It follows that $E$ is weakly equivalent to $\tilde EG$, which is contractible.
\end{itemize}
\end{proof}
%%%%%%%%%%
\section{Classification of principal bundles with fixed base space}
For a nice pointed space $X$, we now provide a classification of principal bundles over $X$, stated as Theorem \ref{classificationbundlesoverX}. Milnor described in \cite{milnor1} a loop space model $\tilde\Omega$ that is in fact a group and plays a role dual to that of the classifying space. The loop space $\tilde\Omega X$ is the structure group of a universal bundle over $X$ that generates all the others by \emph{pushing forward} along some continuous group homomorphisms. Our contribution is a description of when two such continuous homomorphisms induce the same bundle. We also define a reasonable notion of $\tilde{\Omega}f$, for a pointed map $f$, so that $\tilde\Omega$ is pseudofunctorial, and the classification is natural with respect to $X$.

In Theorem \ref{adjunction} we put together the two sides of the coin to produce an adjunction in a homotopical context between the classifying space functor and Milnor's loop space. This adjunction is homotopically well-behaved, in the sense described by Farjoun and Hess in \cite{fh}.

Although the structure of this section is at the beginning as similar as possible to the previous one, the logic of the arguments is quite different. Indeed, many of the proofs depend on results from the previous section. In particular, the following proposition allows some interaction between the context of the previous section and this section.
\begin{prop}
\label{naturality1}
Given a morphism of topological groups $a:G\rightarrow H$,
there is an equivalence of $H$-bundles over $\tilde BG$
$$a_{\star}\tilde EG\simeq(\tilde Ba)^{\star}\tilde EH.$$
\end{prop}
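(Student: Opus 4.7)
The plan is to use the functoriality of Milnor's construction $\tilde E$ together with the two criteria recalled at the end of Section \ref{preliminaries} for detecting pullbacks and pushforwards from equivariant morphisms between bundles. The key observation is that the group homomorphism $a:G\to H$ gives rise, via Theorem \ref{classifyingspace}, to a continuous map $\tilde Ea:\tilde EG\to \tilde EH$ which is $a$-equivariant in the obvious sense, and which covers the pointed map $\tilde Ba:\tilde BG\to\tilde BH$ on orbit spaces.

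First I would use $\tilde Ea$ to produce an $a$-equivariant morphism over $\tilde BG$. Because the square
\[
\xymatrix{
\tilde EG\ar[r]^-{\tilde Ea}\ar[d]&\tilde EH\ar[d]\\
\tilde BG\ar[r]_-{\tilde Ba}&\tilde BH
}
\]
commutes, the universal property of the pullback factors $\tilde Ea$ as
\[
\tilde EG\xrightarrow{\;\alpha\;}(\tilde Ba)^{\star}\tilde EH\xrightarrow{\;\epsilon_{\tilde Ba}\;}\tilde EH,
\]
where $\alpha(e):=(\pi(e),\tilde Ea(e))$ and the first arrow is a morphism over $\tilde BG$. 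A direct computation shows $\alpha(e\cdot g)=\alpha(e)\cdot a(g)$, since the $G$-action on $\tilde EG$ is fibrewise over $\tilde BG$ and $\tilde Ea$ is $a$-equivariant by construction. Thus $\alpha$ is an $a$-equivariant morphism from the $G$-bundle $\tilde EG$ over $\tilde BG$ to the $H$-bundle $(\tilde Ba)^{\star}\tilde EH$ over $\tilde BG$.

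Applying the pushforward criterion from Section \ref{preliminaries} (i.e., that an $a$-equivariant morphism $Q\to E$ of bundles over the same base induces an equivalence $a_{\star}Q\simeq E$), one obtains the desired equivalence of $H$-bundles
\[
a_{\star}\tilde EG\simeq(\tilde Ba)^{\star}\tilde EH
\]
over $\tilde BG$. I do not expect a serious obstacle here: the only genuine content is the functoriality of $\tilde E$ and the compatibility of the $G$-action with the projection, both of which are immediate from Milnor's construction. The statement is really a form of naturality asserting that the assignment $G\mapsto (\tilde EG\to\tilde BG)$ turns group homomorphisms into morphisms of mixed modules in a way compatible with pullback and pushforward, consistently with the identification $a_{\star}\circ f^{\star}\cong f^{\star}\circ a_{\star}$ noted earlier in the section.
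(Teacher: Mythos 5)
Your proposal is correct and follows essentially the same route as the paper: both factor $\tilde Ea$ through the pullback $(\tilde Ba)^{\star}\tilde EH$ via the universal property, verify that the resulting morphism over $\tilde BG$ is $a$-equivariant, and then invoke the pushforward criterion from Section \ref{preliminaries} to obtain the equivalence $a_{\star}\tilde EG\simeq(\tilde Ba)^{\star}\tilde EH$. The only difference is cosmetic: you spell out the formula $\alpha(e)=(\pi(e),\tilde Ea(e))$ and the equivariance computation, which the paper leaves as ``one can check.''
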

\begin{proof}
The projection of $\tilde EG$ onto $\tilde BG$ and the map $\tilde Ea$ induce a map $\phi:\tilde EG\rightarrow\tilde Ba^{\star}\tilde EH$, as indicated by the following diagram.
$$\xymatrix{\tilde EG\ar@/^1pc/[rrd]^-{\pi}\ar@/_1pc/[rdd]_-{\tilde Ea}\ar@{-->}[rd]|-{\phi}&&\\
&\tilde Ba^{\star}\tilde EH\pb\ar[r]|-{\pi^{\star}}\ar[d]|-{\epsilon_{\tilde Ba}}&\tilde BG\ar[d]|-{\tilde Ba}\\
&\tilde EH\ar[r]&\tilde BH}$$
By the commutativity of the upper triangle, it follows that $\phi$ respects the projection over $\tilde BG$. Also, one can check that $\phi$ induces $a$ on each fibre, and it is therefore $a$-equivariant. As a consequence we obtain the desired equivalence.
\end{proof}

Mimicking the approach we took in the dual case, we would like to describe the classification problem for principal bundles over a fixed base space $X$ in terms of the following map:
$$(-)_{\star}Q:\xymatrix{\mathcal Gp(\mathcal Top)(G,H)\ar[r]&_X\mathcal Bun_G\ar@{->>}[r]&_X\mathcal Bun_H/\simeq},$$
where $Q$ is a $G$-bundle over $X$ and $[a:G\rightarrow H]\mapsto a_{\star}Q$. 
Unlike the dual case, this does not induce an interesting assignment on a quotient of $\mathcal Gp(\mathcal Top)(G,H)$.
%%% in general there is not any interesting quotient of $\mathcal Gp(\mathcal Top)(G,H)$ where the assignment is induced. 
We will see that the kernel is computable when $Q=\tilde PX$ and is given by the following equivalence relation.
%%% We will see however that for a specific choice $\tilde PX$ of $Q$ the kernel is computable, and given by the following equivalence relation.
\begin{dfnt} Two continuous homomorphisms $a,b:G\rightarrow H$ are said to be \textbf{algebraically equivalent} if $\tilde Ba$ is homotopic to $\tilde Bb$, which we denote $a\equiv b$. 
\end{dfnt}
Algebraic equivalence of continuous group homomorphisms is easy to define, but hard to imagine. The feeling is that it should be somehow related to a notion of homotopy that respects the algebraic structure. A good candidate is the following.

Given a topological group $H$, the space $H^{\mathbb I}$ of paths in $H$ is a topological group with respect to the pointwise multiplication. 
\begin{dfnt} Two continuous homomorphisms $a,b:G\rightarrow H$ are said to be \textbf{algebraically homotopic} if $a$ is homotopic to $b$ via an \textbf{algebraic homotopy}, i.e., a homotopy $F:G\rightarrow H^{\mathbb I}$ that is also a homomorphism of topological groups.
\end{dfnt}
\begin{prop} If two continuous homomorphisms $a,b:G'\rightarrow G$ are algebraically homotopic, then they are algebraically equivalent.
\end{prop}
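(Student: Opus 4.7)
The plan is to transport the algebraic homotopy from $G'$ to $G^{\mathbb{I}}$ into a genuine homotopy between $\tilde B a$ and $\tilde B b$ by applying Milnor's join construction coordinate-by-coordinate. Given an algebraic homotopy $F:G'\to G^{\mathbb{I}}$, I would first invoke the exponential law to package it as an adjoint continuous map $\tilde F:\mathbb{I}\times G'\to G$, $(s,g)\mapsto F(g)(s)$, which satisfies $\tilde F(0,-)=a$, $\tilde F(1,-)=b$, and, crucially, has each slice $\tilde F(s,-):G'\to G$ a continuous group homomorphism. (This last point uses that $F$ is a homomorphism into the pointwise-multiplication group $G^{\mathbb{I}}$ and that every evaluation $\ev_s:G^{\mathbb{I}}\to G$ is itself a homomorphism of topological groups.)

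Next I would build a homotopy on the total spaces by applying $\tilde F(s,-)$ in each join coordinate:
$$H:\mathbb{I}\times\tilde EG'\longrightarrow\tilde EG,\qquad H(s,[t_0g_0,t_1g_1,\ldots]):=[t_0\tilde F(s,g_0),t_1\tilde F(s,g_1),\ldots].$$
For each fixed $s$, the slice $H(s,-)$ is precisely $\tilde E(\tilde F(s,-))$, and hence is right $\tilde F(s,-)$-equivariant. Consequently $H$ respects orbits and passes to a continuous descent $\bar H:\mathbb{I}\times\tilde BG'\to\tilde BG$; for this latter continuity one uses that $\mathbb{I}$ is locally compact, so that $\id_{\mathbb{I}}\times q:\mathbb{I}\times\tilde EG'\to\mathbb{I}\times\tilde BG'$ remains a quotient map. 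Evaluating at $s=0$ and $s=1$ returns $\bar H(0,-)=\tilde B a$ and $\bar H(1,-)=\tilde B b$, which is the required homotopy witnessing $a\equiv b$.

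The only mildly technical point, and the most plausible source of obstruction, is the continuity of $H$ itself in Milnor's topology on the infinite join. I would handle this by observing that $H$ is assembled coordinatewise from the continuous map $\tilde F$, so it restricts to a continuous map on every finite subjoin $\mathbb{I}\times\star_{n=0}^{N}G'\to\star_{n=0}^{N}G$, and then deducing continuity on the infinite join from the fact that $\tilde EG'$ carries the colimit (weak) topology induced by these finite pieces. Once this is in hand, the rest of the argument is formal.
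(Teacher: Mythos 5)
Your proof is correct and is essentially the paper's argument: the paper constructs the very same homotopy, merely packaged as the composite $\rho_G\circ\tilde BF$, where $\rho_G:\tilde B(G^{\mathbb I})\to(\tilde BG)^{\mathbb I}$ is the descent of the coordinatewise evaluation $\tilde E(G^{\mathbb I})\times\mathbb I\to\tilde EG$, $(\sum_i t_i\cdot\alpha_i,t)\mapsto\sum_i t_i\cdot\alpha_i(t)$, and $\tilde BF$ exists because the algebraic homotopy $F$ is a homomorphism, so functoriality of $\tilde B$ applies. Unwinding that composite yields exactly your $\bar H$, so the two proofs coincide up to presentation.
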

\begin{proof} Consider the continuous map
$$\tilde E(G^{\mathbb I})\times\mathbb I\longrightarrow\tilde  EG$$
given by $(\sum_{i\in\mathbb N}t_i\cdot\alpha_i,t)\mapsto\sum_{i\in\mathbb N}t_i\cdot\alpha_i(t)$. This induces a map
$$\tilde B(G^{\mathbb I})\times\mathbb I\longrightarrow\tilde BG,$$
whose adjoint is denoted
$$\rho_G:\tilde B(G^{\mathbb I})\longrightarrow(\tilde BG)^{\mathbb I}.$$
Now, if $F:G'\rightarrow G^{\mathbb I}$ is an algebraic homotopy between $a$ and $b$, then
$$\rho_G\circ\tilde BF:\xymatrix{\tilde BG'\ar[r]^-{\tilde BF}&\tilde B(G^{\mathbb I})\ar[r]^-{\rho_G}&(\tilde BG)^{\mathbb I}}$$
is a homotopy between $\tilde Ba$ and $\tilde Bb$. Therefore $a$ and $b$ are algebraically equivalent.
\end{proof}
\begin{rem}[Naturality of the classification of bundles with a fixed structure group]
\label{naturalitybijection1}
As a consequence of Proposition \ref{naturality1}, the bijections described in Theorems \ref{classificationGbundles} and \ref{pointedclassificationGbundles} are \textbf{natural} in both the variables, with respect to (based) continuous maps up to homotopy and continuous homomorphisms up to algebraic equivalence. In fact, the following diagrams commute up to equivalence.
$$\xymatrix{X\ar[d]_-f&\mathcal Top_{(\star)}(X,\tilde BG)\ar[rr]^-{(-)^{\star}\tilde EG}&&_X\mathcal Bun_G\\
Y&\mathcal Top_{(\star)}(Y,\tilde BG)\ar[u]^-{-\circ f}\ar[rr]_-{(-)^{\star}\tilde EG}&&_Y\mathcal Bun_G\ar[u]_-{f^{\star}(-)}}$$
$$\xymatrix{G\ar[d]_-a&\mathcal Top_{(\star)}(X,\tilde BG)\ar[d]_-{\tilde Ba\circ-}\ar[rr]^-{(-)^{\star}\tilde EG}&&_X\mathcal Bun_G\ar[d]^-{a_{\star}(-)}\\
H&\mathcal Top_{(\star)}(X,\tilde BH)\ar[rr]_-{(-)^{\star}\tilde EH}&&_X\mathcal Bun_H}$$
\end{rem}
The first attempt to define a \emph{universal bundle} for a base space $K$ was for $K$ a connected simplicial complex, pointed on one of its vertices. Milnor defines a group $\tilde\Omega K$, and a $\tilde\Omega K$-bundle over $K$ that are respectively a strictification of the loop space $\Omega K$ and the based path space $PK$. Indeed, using the cellular structure of $K$, Milnor selected a class of easy paths and loops, avoiding any issue related to the parametrization.

Let $K$ be a connected simplicial complex, with a fixed vertex $\overline x$,
\begin{itemize}
\item[>>] We denote by $L_nK$ the set of $n+1$-uples $(x_n,\dots,x_0)$ of points of $K$ such that two consecutive elements lie in a common simplex of $K$, topologized as a subspace of $K^{n+1}$.
\item[>>] We denote by $\tilde LK$ the set of \textbf{Milnor free paths}
$$\tilde LK:=({\bigcup_{n\in\mathbb N}L_nK})/_{\sim},$$
where the relation is generated by the conditions $(x_n,\dots,x_i,\dots,x_0)\sim(x_n,\dots,\hat x_i,\dots,x_0)$, when either $x_i=x_{i-1}$ or $x_{i-1}=x_{i+1}$.
\item[>>] We denote by $\tilde PK$ the set of \textbf{based Milnor paths}, i.e., is the subset of $\tilde LK$, whose elements are of the form $[(x_n,x_{n-1},\dots,x_1,\overline x)]$.
\item[>>] We denote by $\tilde\Omega K$ the set of \textbf{based Milnor loops}, i.e., the subset of $\tilde LK$, whose elements are of the form $[(\overline x,x_{n-1},\dots,x_1,\overline x)]$.
\end{itemize}
\begin{thm}\cite[Sections 3,5]{milnor1}
%[Milnor's classifying group]
%[Universal property of the classifying group]
\label{classifyinggroup}
Let $K$ be a connected simplicial complex with a fixed vertex. The space $\tilde PK$ is contractible and has the structure of a $\tilde\Omega K$-bundle over $X$. Moreover, $\tilde\Omega K$ is a is a nice topological group. Furthermore, given a $G$-bundle $P$ over $K$, there exists a morphism of bundles over $K$,
$$\phi:\tilde PK\longrightarrow P,$$
that is equivariant with respect to a homomorphism induced between the fibres
$$a:\tilde\Omega K\longrightarrow G.$$
As a consequence,
$$a_{\star}\tilde PK\simeq P.$$
\end{thm}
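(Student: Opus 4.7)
The plan is to establish, in order, the contractibility of $\tilde PK$, the bundle structure of $\tilde PK\to K$ with structure group $\tilde\Omega K$, the niceness of $\tilde\Omega K$ as a topological group, and the universal property. Throughout, the combinatorial flexibility of Milnor paths is essential: any two points lying in a common simplex can be joined by a length-one path, and the equivalence relation on $\tilde LK$ deletes repeated consecutive entries or entries flanked by equal neighbours.

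For the contractibility of $\tilde PK$, I would write down an explicit deformation retraction to the basepoint $[(\overline x)]$. The idea is to slide the leftmost coordinate $x_n$ of a representative $(x_n,\dots,x_1,\overline x)$ along the simplex spanned by $x_n$ and $x_{n-1}$ using the affine structure; upon reaching $x_{n-1}$ the resulting path $(x_{n-1},x_{n-1},\dots,\overline x)$ is equivalent to the shorter path $(x_{n-1},\dots,\overline x)$. Chaining these elementary contractions so that shorter paths are shrunk proportionally faster produces a continuous homotopy to $[(\overline x)]$, which is well-defined on equivalence classes because the slidings commute with the deletion relations.

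For the bundle structure, the right action of $\tilde\Omega K$ on $\tilde PK$ is concatenation on the right, which is well-defined on equivalence classes, fibrewise over $K$, and continuous. Local trivializations over the open star of a vertex $v$ arise from a canonical section sending $x\in\mathrm{star}(v)$ to the path $[(x,v,\dots,\overline x)]$ obtained by prepending the edge $(x,v)$ to a fixed Milnor path from $v$ to $\overline x$; combined with the free transitive action of $\tilde\Omega K$ on each fibre, this produces the required equivalence of mixed modules with $\mathrm{star}(v)\times\tilde\Omega K$. The group structure on $\tilde\Omega K$ is concatenation of loops, and Milnor's filtration by word-length -- each stage being a quotient of a finite union of finite products of simplices of $K$ -- provides a countable CW-decomposition with respect to which multiplication and inversion are cellular, yielding that $\tilde\Omega K$ is nice.

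The main obstacle is the universal property; unlike the previous points, this is the only step that uses the bundle $P$. Since $\tilde PK$ is contractible and admits a CW-structure by Milnor's construction, the pullback $\pi^{\star}P$ is a $G$-bundle over a contractible CW-complex, hence trivial, so it admits a global $G$-equivariant section. The crucial additional step is to choose such a section compatibly with the $\tilde\Omega K$-action on $\tilde PK$ (which lifts to $\pi^{\star}P$ and commutes with the $G$-action), and I would arrange this by lifting paths inductively: starting from a chosen basepoint in $P_{\overline x}$, each elementary step of a Milnor path is lifted through a local trivialization of $P$, and the defining equivalences of $\tilde LK$ translate into coherence conditions that are automatic because $P$ is principal. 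Setting $\phi(\gamma)$ equal to the endpoint of the lift of $\gamma$ yields a continuous map $\phi:\tilde PK\to P$ covering $\pi$; restricting to the fibre $\tilde\Omega K$ and identifying $P_{\overline x}\cong G$ via the basepoint produces a continuous $a:\tilde\Omega K\to G$, and the lift-by-concatenation construction makes $\phi$ automatically $a$-equivariant, which in turn forces $a$ to be a group homomorphism. The final equivalence $a_{\star}\tilde PK\simeq P$ is then an immediate instance of the push-forward principle recalled in Section~\ref{preliminaries}.
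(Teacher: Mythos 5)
The paper offers no proof of this statement: it is imported wholesale from Milnor, with the citation \cite[Sections 3,5]{milnor1} standing in for the argument, so there is no in-text proof to measure yours against. Your sketch is essentially a reconstruction of Milnor's own construction (concatenation action, trivializations over open stars, word-length filtration, fibre transport along elementary paths), but two of its steps have genuine gaps rather than omitted routine details.

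First, the explicit contraction of $\tilde PK$ is not well defined on equivalence classes as described. A class in $\tilde PK$ has representatives of every sufficiently large word length, since $(x_n,\dots,\overline x)\sim(x_n,x_n,x_{n-1},\dots,\overline x)$, and your schedule ``shorter paths are shrunk proportionally faster'' assigns these representatives different timetables: for the padded representative the first sliding stage is constant (the two leading entries coincide) and the motion of $x_n$ toward $x_{n-1}$ happens one stage later. So the slidings do not commute with the deletion relations in the way you assert, and the homotopy does not descend to $\tilde LK/_{\sim}$ without a further idea. Second, and more seriously, the transport step in the universal property is the technical heart of Milnor's argument and you assume it away. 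To lift the elementary step $(x_{i+1},x_i)$ you need a trivialization of $P$ over a neighbourhood of the simplex spanned by $x_i$ and $x_{i+1}$; in general such a trivialization exists only after subdividing $K$ so that $P$ becomes trivial over the open stars of vertices, and one must then choose the charts coherently (depending only on the unordered pair, so that the two deletion relations become identities) and \emph{continuously} in $(x_{i+1},x_i)\in L_1K$, even as the carrier simplex jumps. Making this choice --- essentially rewriting $P$ via transition functions subordinate to the star covering and defining the transport by products of those transition functions --- is what Milnor's Sections 4 and 5 are devoted to; without it neither the continuity of $\phi$ nor its descent to equivalence classes is established, and the phrase ``coherence conditions that are automatic because $P$ is principal'' does not supply it.
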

%%%%%%\begin{proof}
%%%%%%See \cite{milnor1}, Section 3, and \cite{milnor2}, Section 5.
%%%%%%%%%This result appears as Theorem 3.1 in \cite{milnor1}. More precisely, Milnor proves it by the following steps.
%%%%%%%%%\begin{itemize}
%%%%%%%%%\item[>>] \emph{Lemma 3.2}; $\tilde LK$ can be given a CW-complex structure such that $\tilde PK$ and $\tilde\Omega K$ are subcomplexes.
%%%%%%%%%\item[>>] \emph{Lemma 3.3.}; The projection $\tilde PK\longrightarrow K$ is continuous.
%%%%%%%%%\item[>>] \emph{Lemma 3.4}; The multiplication on $\tilde LK$ is continuous.
%%%%%%%%%\item[>>] \emph{Proposition 3.5}; $\tilde{\Omega}K$ is a topological group.
%%%%%%%%%\item[>>] \emph{Lemma 3.6}; $\tilde PK$ is contractible.
%%%%%%%%%\item[>>] \emph{\cite{milnor2}, Section 5}; $\tilde{\Omega}K$ is a countable CW-group.
%%%%%%%%%\end{itemize}
%%%%%%\end{proof}
The constructions of $\tilde P$ and $\tilde\Omega$ for simplicial complexes can be extended to connected countable CW-complexes, based at vertices, producing a bundle with the same properties. The idea is to first replace such a space by a homotopy equivalent simplicial complex.

Let $X$ be a connected countable CW-complex, based at one of its vertices. As we show in \cite[Theorem 1.3]{whitehead}, there exists a locally finite simplicial complex $K$ and a homotopy equivalence $s:X\rightarrow K$. Up to further subdivision of $K$, which does not change the topology because $K$ is locally finite, we can assume that the image of the base point through $s$ is a vertex of $K$. With a global definition of $\tilde P$ and $\tilde\Omega$ on $\mathcal T_{\star}$ in mind, it is convenient to choose such a simplicial replacement for every space in $\mathcal T_{\star}$.
\begin{choice}
Let $X\in\mathcal T_{\star}$.
%%% be a based connected space that admits a countable CW-structure such that the base point is a vertex.
We choose
\begin{itemize}
\item a countable CW-structure such that the base point is a vertex, and
\item a homotopy equivalence $s_X:X\rightarrow K(X)$, where $K(X)$ is a locally finite simplicial complex, and the image of the base point through $s_X$ is a vertex of $K(X)$. When $X$ admits a simplicial structure such that the base point is a vertex we take $s_X$ to be the identity.
\end{itemize}
\end{choice}
By setting
$$\tilde\Omega X:=\tilde\Omega(K(X))\text{ and }\tilde PX:=s_X^{\star}\tilde PK(X),$$
we obtain the desired bundle.
\begin{prop}\cite[Corollary 3.7]{milnor1}
\label{classifyinggroupforCWcomplexes}
\label{universalpropertyclassifyinggroup}
Let $X$ be a connected countable CW-complex, with a fixed vertex. The space $\tilde PX$ is contractible, and has the structure of a $\tilde\Omega X$-bundle over $X$. Moreover, $\tilde\Omega X$ is a nice topological group. Futhermore, given a a $G$-bundle $P$ over $X$ there exists a morphism of bundles over $X$,
$$\phi:\tilde PX\longrightarrow P,$$
that is equivariant with respect to a homomorphism induced between the fibres
$$a:\tilde\Omega X\longrightarrow G.$$
As a consequence,
$$a_{\star}\tilde PX\simeq P.$$
%%%\qedhere
\end{prop}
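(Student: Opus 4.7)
The plan is to reduce every assertion to Milnor's Theorem~\ref{classifyinggroup} for the chosen simplicial replacement $s_X\colon X\to K(X)$, transporting the statements back to $X$ via pullback together with a chosen homotopy inverse $t_X\colon K(X)\to X$ of $s_X$ (which may be arranged to be pointed since the inclusions of the base vertices are cofibrations in both spaces).

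First I would dispose of the structural claims. Since $X$ is connected, so is the homotopy-equivalent $K(X)$, hence Theorem~\ref{classifyinggroup} applied to $K(X)$ directly yields that $\tilde\Omega X=\tilde\Omega K(X)$ is a nice topological group and that $\tilde PK(X)\to K(X)$ is a $\tilde\Omega X$-bundle. Because pullback preserves principal bundles (Section~\ref{preliminaries}), $\tilde PX=s_X^\star\tilde PK(X)$ is a $\tilde\Omega X$-bundle over $X$. For the contractibility of $\tilde PX$, I would observe that $\tilde PK(X)\to K(X)$ is a numerable fibre bundle (as $K(X)$ is paracompact), hence a Serre fibration, exactly as invoked in the proof of Theorem~\ref{generalizeduniversalpropertyclassifyingspace}. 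Since the Serre model structure on $\mathcal Top$ is right proper, pulling it back along the weak equivalence $s_X$ yields a weak equivalence $\tilde PX\to\tilde PK(X)$; as the target is contractible by Theorem~\ref{classifyinggroup}, the space $\tilde PX$ is weakly contractible.

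For the universal property, given a $G$-bundle $P$ over $X$, I would form $t_X^\star P$, which is a $G$-bundle over the simplicial complex $K(X)$. Applying Theorem~\ref{classifyinggroup} yields a bundle morphism $\psi\colon\tilde PK(X)\to t_X^\star P$ over $K(X)$, equivariant along some continuous homomorphism $a\colon\tilde\Omega X=\tilde\Omega K(X)\to G$, and thereby an equivalence $a_\star\tilde PK(X)\simeq t_X^\star P$ of $G$-bundles over $K(X)$. Pulling back along $s_X$ and using that pullback commutes with pushforward (Section~\ref{preliminaries}),
$$a_\star\tilde PX=a_\star s_X^\star\tilde PK(X)\cong s_X^\star a_\star\tilde PK(X)\simeq s_X^\star t_X^\star P=(t_X\circ s_X)^\star P.$$
Since $t_X\circ s_X\simeq\id_X$, Proposition~\ref{homotopicmapsgiveequivalentbundles} gives $(t_X\circ s_X)^\star P\simeq P$, whence $a_\star\tilde PX\simeq P$ as $G$-bundles over $X$. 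The required $a$-equivariant morphism $\phi\colon\tilde PX\to P$ over $X$ is then obtained by composing the $a$-equivariant unit $\eta_a\colon\tilde PX\to a_\star\tilde PX$ with this equivalence.

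The main technical point is ensuring that the final output genuinely lives \emph{over} $X$ (not merely $K(X)$) and that the fibrewise equivariance upgrades to an honest homomorphism $a$ of topological groups rather than a mere map up to homotopy. This is handled by keeping every transfer at the level of bundles (using the commutation $a_\star\circ s_X^\star\cong s_X^\star\circ a_\star$) and by extracting $a$ directly from Milnor's simplicial construction on $K(X)$, where the homomorphism is induced on the fibres of an honest bundle morphism.
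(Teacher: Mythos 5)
Your proposal is correct and follows essentially the same route the paper intends: the paper gives no proof of its own here, merely citing Milnor and sketching the reduction ``replace $X$ by the simplicial complex $K(X)$ via $s_X$ and transport the bundle data back by pullback,'' which is exactly what you carry out, using the commutation $a_\star\circ s_X^\star\cong s_X^\star\circ a_\star$ and Proposition~\ref{homotopicmapsgiveequivalentbundles} to land back over $X$. The only hair to split is that your right-properness argument yields \emph{weak} contractibility of $\tilde PX$; since the total space of a bundle over a CW-complex with CW fibre has the homotopy type of a CW-complex, this upgrades to genuine contractibility, and in any case weak contractibility is all the paper ever uses.
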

%%%\begin{proof}
%%%This appears as Corollary 3.7 in \cite{milnor1}. The only thing to remark is that the counit $\epsilon_{s_X}:\tilde PX\longrightarrow\tilde P(K(X))$ is a morphism of bundles and therefore induces a morphism between the homotopy long exact sequences, where all the components relative to the groups are identities, and the ones relative to the base spaces are isomorphisms. Thus $\tilde PX$ is (weakly) equivalent to $\tilde P(K(X))$, which is contractible as seen in Theorem \ref{classifyinggroup}.
%%%\end{proof}
The last part of the statement is dual to Proposition \ref{universalpropertyclassifyingspace}, and shows that $\tilde PX$ can be thought of as a \textbf{universal bundle} over $X$. It is important to note that this universal property is not valid (a priori) for other contractible bundles over $X$; compare with Proposition \ref{uniquenessclassifyinggroup}.
%%%\begin{thm}
%%%[Universal property of the classifying group]
%%%\label{universalpropertyclassifyinggroup}
%%%Let $P$ be a $G$-bundle over a nice space $X\in\mathcal T_{\star}$. Then there exists a morphism of bundles over $X$,
%%%$$\phi:\tilde PX\longrightarrow P,$$
%%%that is equivariant with respect to a homomorphism
%%%$$a:\tilde\Omega X\longrightarrow G.$$
%%%As a consequence,
%%%$$a_{\star}\tilde PX\simeq P.$$
%%%\end{thm}
%%%\begin{proof}
%%%Milnor proves this the result when $X$ is a simplicial complex, in \cite{milnor1}, Theorem 5.1. The general case where $X\in\mathcal T_{\star}$ is only a CW complex is stated as \cite{milnor1}, Corollary 3.7. Given a $G$-bundle $P$ over $X$, we can pull it back along a homotopy inverse of $s_X$ to obtain a $G$-bundle over the simplicial complex $K(X)$, for which we know the theorem holds. Then there exists a morphism $\psi:\tilde PK(X)\rightarrow s_X^{\star}P$ of bundles over $SX$ that induces a homomorphism $a:\tilde{\Omega}X=\tilde{\Omega}K(X)\longrightarrow G$ on each fibre. Postcomposing with $\epsilon_{s_X}$ we find the desired morphism $\phi:\tilde PX\rightarrow P$ of bundles over $X$, which is still $a$-equivariant.
%%%\end{proof}

Given a space $X\in\mathcal T_{\star}$, since $\tilde PX$ is a $\tilde{\Omega}X$-bundle over $X$, as a consequence of Theorem \ref{universalpropertyclassifyingspace}, there exists a morphism $\tilde PX\rightarrow\tilde E\tilde{\Omega}X$ of $\tilde{\Omega}X$-bundles inducing a map on the orbit spaces $X\rightarrow B\tilde{\Omega}X$, whose pullback of $\tilde E\tilde{\Omega}X$ is equivalent to $\tilde PX$.
\begin{choice}[Unit]
\label{unit}
Let $X\in\mathcal T_{\star}$. We choose a morphism of $\tilde{\Omega}X$-bundles
$$\tilde PX\longrightarrow\tilde E\tilde{\Omega}X,$$
and denote the induced weak equivalence
$$\eta_X:X\longrightarrow B\tilde{\Omega}X,$$
which induces an equivalence of $\tilde\Omega X$-bundles over $X$
$$\eta_X^{\star}(E\tilde{\Omega}X)\simeq \tilde PX.$$
\end{choice}
The collection $\eta$ that we just chose will play the role of a \textbf{unit} of the hypothetical adjunction. Once proven that $\eta$ is \emph{homotopy universal}, the analogues of the other properties that characterize an adjunction will follow formally.
\begin{prop}
[Universality of the unit]
\label{universalityunit}
For every $X\in\mathcal T_{\star}$, the unit $\eta_X$ is \textbf{homotopy universal} among the continuous maps from $X$ into a classifying space $\tilde B$. In other words, for any pointed continuous map $f:X\rightarrow\tilde BG$ there exists a unique (up to algebraic equivalence) homomorphism of topological groups $a:\tilde{\Omega}X\rightarrow G$ such that $Ba\circ\eta_X\simeq f$.
$$\xymatrix{X\ar[r]^-{\eta_X}\ar[rd]_-f&B\tilde{\Omega}X\ar[d]^-{Ba}&\tilde{\Omega}X\ar@{-->}[d]^-a\\
&BG&G}$$
\end{prop}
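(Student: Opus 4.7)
The plan is to translate both halves of the universality statement into facts about the $G$-bundles involved, using the pointed classification in Theorem \ref{pointedclassificationGbundles}, the universal property of $\tilde PX$ in Proposition \ref{classifyinggroupforCWcomplexes}, and the defining property of $\eta_X$ from Choice \ref{unit}. The main point is that $\tilde Ba\circ\eta_X$ and $f$ will classify the same $G$-bundle over $X$.

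\textbf{Existence.} Given a pointed map $f:X\to\tilde BG$, I form the $G$-bundle $Q:=f^{\star}\tilde EG$ over $X$. Proposition \ref{classifyinggroupforCWcomplexes} applied to $Q$ produces a continuous group homomorphism $a:\tilde\Omega X\to G$ such that $a_{\star}\tilde PX\simeq Q$. I then compute the $G$-bundle classified by $\tilde Ba\circ\eta_X$, chaining the functoriality of pullback, Proposition \ref{naturality1} (which gives $(\tilde Ba)^{\star}\tilde EG\simeq a_{\star}\tilde E\tilde\Omega X$), the commutation $a_{\star}\circ\eta_X^{\star}\cong\eta_X^{\star}\circ a_{\star}$ from Section \ref{preliminaries}, and the defining equivalence $\eta_X^{\star}\tilde E\tilde\Omega X\simeq\tilde PX$:
$$(\tilde Ba\circ\eta_X)^{\star}\tilde EG\simeq\eta_X^{\star}a_{\star}\tilde E\tilde\Omega X\simeq a_{\star}\eta_X^{\star}\tilde E\tilde\Omega X\simeq a_{\star}\tilde PX\simeq f^{\star}\tilde EG.$$
By the bijection in Theorem \ref{pointedclassificationGbundles}, this forces $\tilde Ba\circ\eta_X\simeq f$.

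\textbf{Uniqueness.} Suppose $a,b:\tilde\Omega X\to G$ both satisfy $\tilde Ba\circ\eta_X\simeq f\simeq\tilde Bb\circ\eta_X$. I claim that $\eta_X$ is a homotopy equivalence between CW-complexes, from which $\tilde Ba\simeq\tilde Bb$ (that is, $a\equiv b$) follows at once. Indeed, $X$ is a countable CW-complex by the definition of $\mathcal T_{\star}$; since $\tilde\Omega X$ is a nice topological group by Proposition \ref{classifyinggroupforCWcomplexes}, Theorem \ref{classifyingspace} guarantees that $\tilde B\tilde\Omega X$ is also a countable CW-complex. The unit $\eta_X$ is a weak equivalence by Choice \ref{unit}, so Whitehead's theorem upgrades it to a homotopy equivalence, and precomposition with $\eta_X$ induces a bijection $[\tilde B\tilde\Omega X,\tilde BG]\to[X,\tilde BG]$.

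I expect the main obstacle to be the existence half: one has to check that each step in the chain of equivalences really is an equivalence of $G$-bundles (not merely of mixed modules or of underlying spaces) and that the pullback-pushforward commutation meshes cleanly with Proposition \ref{naturality1}. Uniqueness is then a formal consequence of Whitehead's theorem once the CW-structure of $\tilde B\tilde\Omega X$ is in place.
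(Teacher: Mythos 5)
Your proposal is correct and follows essentially the same route as the paper: the existence half uses the identical chain of bundle equivalences $(\tilde Ba\circ\eta_X)^{\star}\tilde EG\simeq\dots\simeq f^{\star}\tilde EG$ followed by injectivity of the pullback classification, and the uniqueness half rests on $\eta_X$ being a homotopy equivalence so that precomposition with it is injective on homotopy classes. The only (harmless) differences are that you invoke the bijection of Theorem \ref{pointedclassificationGbundles} where the paper cites Proposition \ref{injectivityofpullback} directly, and you spell out the Whitehead-theorem step that the paper leaves implicit when it writes down a homotopy inverse $\eta^{-1}$.
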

\begin{proof}
The pullback $f^{\star}\tilde EG$ is a $G$-bundle over $X$. Thus, by Proposition \ref{universalpropertyclassifyinggroup}, there exists $a:\tilde{\Omega}X\longrightarrow G$ such that $a_{\star}\tilde PX\simeq f^{\star}\tilde EG$.
As a consequence of Proposition \ref{naturality1} and Choice \ref{unit}, we have the following equivalence of bundles:
$$(\tilde Ba\circ\eta_X)^{\star}\tilde EG\simeq\eta_X^{\star}(\tilde Ba)^{\star}\tilde EG\simeq\eta_X^{\star}(a_{\star}(\tilde E\tilde{\Omega}X))\simeq a_{\star}(\eta_X^{\star}(\tilde E\tilde{\Omega}X))\simeq a_{\star}\tilde PX\simeq f^{\star}\tilde EG.$$
Thus, by Proposition \ref{injectivityofpullback}, $\tilde Ba\circ\eta_X\simeq f$.

For the uniqueness, suppose $a':\tilde{\Omega}X\rightarrow G$ also satisfies the condition. Then, if $\eta^{-1}$ denotes a homotopy inverse for $\eta_X$, we have that
$$\tilde Ba\simeq\tilde Ba\circ\id{}_{\tilde{\Omega}X}\simeq\tilde Ba\circ\eta_X\circ\eta^{-1}\simeq f\circ\eta^{-1}\simeq\tilde Ba'\circ\eta_X\circ\eta^{-1}\simeq\tilde Ba'\circ\id{}_{\tilde{\Omega}X}\simeq\tilde Ba'.$$
So $a\equiv a'$.
\end{proof}
Using the unit $\eta_X$ we are finally able to describe when two group homomorphisms defined on $\tilde\Omega X$ induce the same bundle over $X$.
\begin{prop}
\label{injectivityofpushforward}
Let $X\in\mathcal T_{\star}$. Two topological group morphisms $\tilde\Omega X\rightarrow G$ are algebraically equivalent if and only if their pushforwards of $\tilde PX$ are equivalent bundles of $G$-bundles over $X$, i.e.,
$$a\equiv b:\tilde\Omega X\rightarrow G\quad\Longleftrightarrow\quad a_{\star}\tilde PX\simeq b_{\star}\tilde PX.$$
\end{prop}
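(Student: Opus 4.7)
The plan is to chain together the results already established to reduce the statement to Proposition \ref{injectivityofpullback} applied to the maps $\tilde Ba\circ\eta_X$ and $\tilde Bb\circ\eta_X$ into $\tilde BG$. The key observation is that pushforward of $\tilde PX$ along any homomorphism $a:\tilde\Omega X\to G$ can be rewritten, up to equivalence, as a pullback of the universal $G$-bundle $\tilde EG$ via the composite $\tilde Ba\circ\eta_X$.

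More precisely, I would compute the chain
\begin{equation*}
a_{\star}\tilde PX \;\simeq\; a_{\star}(\eta_X^{\star}\tilde E\tilde\Omega X) \;\simeq\; \eta_X^{\star}(a_{\star}\tilde E\tilde\Omega X) \;\simeq\; \eta_X^{\star}((\tilde Ba)^{\star}\tilde EG) \;\simeq\; (\tilde Ba\circ\eta_X)^{\star}\tilde EG,
\end{equation*}
where the first equivalence comes from Choice \ref{unit}, the second from the fact (recorded in the discussion of mixed modules) that pullback and pushforward of mixed modules commute, the third from Proposition \ref{naturality1} applied to $a$, and the fourth from functoriality of the pullback. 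The same chain applies with $b$ in place of $a$.

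From here both directions follow easily. If $a\equiv b$, i.e.\ $\tilde Ba\simeq\tilde Bb$, then post-composing with $\eta_X$ gives $\tilde Ba\circ\eta_X\simeq\tilde Bb\circ\eta_X$, and Proposition \ref{homotopicmapsgiveequivalentbundles} together with the chain above yields $a_{\star}\tilde PX\simeq b_{\star}\tilde PX$. Conversely, if $a_{\star}\tilde PX\simeq b_{\star}\tilde PX$, then by the chain $(\tilde Ba\circ\eta_X)^{\star}\tilde EG\simeq(\tilde Bb\circ\eta_X)^{\star}\tilde EG$, and Proposition \ref{injectivityofpullback} (applicable since $X$ is a CW-complex) gives $\tilde Ba\circ\eta_X\simeq\tilde Bb\circ\eta_X$.

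The last step is to cancel $\eta_X$ on the right to conclude $\tilde Ba\simeq\tilde Bb$. This is the only delicate point, and it is handled exactly as in the uniqueness argument in the proof of Proposition \ref{universalityunit}: since $\eta_X$ is a weak equivalence between CW-complexes (both $X$ and $\tilde B\tilde\Omega X$ are countable CW-complexes by Theorem \ref{classifyingspace}, as $\tilde\Omega X$ is a nice topological group by Proposition \ref{classifyinggroupforCWcomplexes}), Whitehead's theorem provides a homotopy inverse $\eta^{-1}$, and then
\begin{equation*}
\tilde Ba \;\simeq\; \tilde Ba\circ\eta_X\circ\eta^{-1} \;\simeq\; \tilde Bb\circ\eta_X\circ\eta^{-1} \;\simeq\; \tilde Bb,
\end{equation*}
so that $a\equiv b$. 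The main conceptual obstacle is simply assembling the naturality statements correctly; no new geometric input is required beyond what was established in Section 2 and in Propositions \ref{naturality1} and the universality of $\eta$.
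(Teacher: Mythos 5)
Your proposal is correct and follows essentially the same route as the paper: the same chain of equivalences rewriting $a_{\star}\tilde PX$ as $(\tilde Ba\circ\eta_X)^{\star}\tilde EG$ via Choice \ref{unit}, Proposition \ref{naturality1}, and the commutation of pullback with pushforward, followed by Proposition \ref{homotopicmapsgiveequivalentbundles} in one direction and Proposition \ref{injectivityofpullback} plus cancellation of the homotopy equivalence $\eta_X$ in the other. The only difference is that you spell out the justification for cancelling $\eta_X$ (Whitehead's theorem providing a homotopy inverse), which the paper leaves implicit.
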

\begin{proof}
As a consequence of Proposition \ref{naturality1} and Choice \ref{unit}, we have the following equivalences of $G$-bundles over $X$,
$$(\tilde Ba\circ\eta_X)^{\star}\tilde EG
\simeq\eta_X^{\star}(\tilde Ba)^{\star}\tilde EG
\simeq\eta_X^{\star}a_{\star}\tilde E(\tilde{\Omega}X)
\simeq a_{\star}\eta_X^{\star}\tilde E(\tilde{\Omega}X)
\simeq a_{\star}\tilde PX,$$
and similarly $$(\tilde Bb\circ\eta_X)^{\star}\tilde EG\simeq b_{\star}\tilde PX.$$
Thus, using Proposition \ref{homotopicmapsgiveequivalentbundles},
$$a\equiv b\quad\Longleftrightarrow\quad Ba\simeq Bb\quad\Longleftrightarrow\quad Ba\circ\eta_X\simeq Bb\circ\eta_X\quad\Longleftrightarrow$$
$$\Longleftrightarrow\quad(Ba\circ\eta_X)^{\star}\tilde E(\tilde\Omega X)\simeq(Bb\circ\eta_X)^{\star}\tilde E(\tilde\Omega X)\quad\Longleftrightarrow\quad a_{\star}\tilde PX\simeq b_{\star}\tilde PX.$$
\end{proof}
Proposition \ref{universalpropertyclassifyinggroup} and Proposition \ref{injectivityofpushforward} together imply the following.
\begin{thm}[Classification of bundles over $X$]
\label{classificationbundlesoverX}
Let $X\in\mathcal T_{\star}$.\\
The map $(-)_{\star}\tilde PX$ induces a bijection
$$(-)_{\star}\tilde PX:\mathcal Gp(\mathcal Top)(\tilde\Omega X,G)/_{\equiv}\longrightarrow{{}_X\mathcal Bun_G}/_{\simeq}$$
between the equivalence classes of continuous homomorphisms $\tilde\Omega X\rightarrow G$ and the equivalence classes of $G$-bundles over $X$.\qedhere
\end{thm}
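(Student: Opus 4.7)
The plan is straightforward since essentially all the heavy lifting has already been isolated into Proposition \ref{universalpropertyclassifyinggroup} and Proposition \ref{injectivityofpushforward}; the theorem is just their combination. I would proceed in three short steps: well-definedness, surjectivity, and injectivity.

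First, I would check that the assignment $[a]\mapsto[a_{\star}\tilde PX]$ descends to equivalence classes, i.e.\ that algebraically equivalent homomorphisms produce equivalent $G$-bundles. This is precisely the ``$\Rightarrow$'' direction of Proposition \ref{injectivityofpushforward}, so I would simply invoke it (or, equivalently, extract the part of that proof that uses the chain of equivalences
$(\tilde Ba\circ\eta_X)^{\star}\tilde EG\simeq a_{\star}\tilde PX$ combined with Proposition \ref{homotopicmapsgiveequivalentbundles}). Once the map on equivalence classes is defined, everything else reduces to showing it is a bijection.

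For surjectivity, let $P$ be any $G$-bundle over $X$. Proposition \ref{universalpropertyclassifyinggroup} supplies a bundle morphism $\tilde PX\to P$ equivariant along some $a\colon\tilde\Omega X\to G$, together with the equivalence $a_{\star}\tilde PX\simeq P$. Thus $[P]$ lies in the image of $[a]$, and surjectivity follows. For injectivity, suppose that $a,b\colon\tilde\Omega X\to G$ satisfy $a_{\star}\tilde PX\simeq b_{\star}\tilde PX$; the ``$\Leftarrow$'' direction of Proposition \ref{injectivityofpushforward} then yields $a\equiv b$. Combining well-definedness, surjectivity, and injectivity produces the claimed bijection.

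Since the main difficulties (existence of a classifying homomorphism, and the comparison of pushforwards through the identifications $(\tilde Ba\circ\eta_X)^{\star}\tilde EG\simeq a_{\star}\tilde PX$ provided by Proposition \ref{naturality1} and Choice \ref{unit}) have already been handled in the two propositions being cited, the only ``obstacle'' here is purely formal: making sure the quotient by $\equiv$ on the source is exactly what is needed to match the quotient by $\simeq$ on the target, with neither too many nor too few identifications. But this is exactly the content of the biconditional in Proposition \ref{injectivityofpushforward}, so there is nothing further to verify and the proof collapses to a one-line citation of the two propositions.
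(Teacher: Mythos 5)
Your proposal is correct and matches the paper's own argument exactly: the paper derives the theorem by combining Proposition \ref{universalpropertyclassifyinggroup} (surjectivity) with the biconditional of Proposition \ref{injectivityofpushforward} (well-definedness and injectivity). There is nothing to add.
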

The next aim is to make $\tilde{\Omega}$ act on continuous maps, so that the classification described in Theorem \ref{classificationbundlesoverX} is \emph{natural}. Unfortunately the nature of the values of $\tilde{\Omega}$ on objects involves arbitrary choices concerning the cell decomposition, and there is no canonical way to assign a homomorphism to a continuous function. Instead we require that $\tilde{\Omega}f$ be consistent with $f$ in terms of the bundle they produce, imitating Proposition \ref{naturality1}.

Let $f:X\rightarrow Y$ be a continuous map in $\mathcal T_{\star}$. Then $f^{\star}\tilde PY$ is a $\tilde{\Omega}Y$-bundle over $X$. As a consequence of Proposition \ref{universalpropertyclassifyinggroup}, there exists a morphism $\tilde PX\rightarrow f^{\star}\tilde PY$ of bundles over $X$ inducing a homomorphism on the fibres $\tilde{\Omega}X\rightarrow\tilde{\Omega}Y$ whose pushforward of $\tilde PX$ is equivalent to $f^{\star}\tilde PY$.
\begin{choice}
\label{choiceOmegaonarrows}
Let $f:X\rightarrow Y$ be a continuous map in $\mathcal T_{\star}$. We choose a morphism of bundles over $X$
$$\phi_f:\tilde PX\longrightarrow f^{\star}\tilde PY,$$ and denote the restriction to the fibre
$$\tilde{\Omega}f:\tilde{\Omega}X\longrightarrow\tilde{\Omega}Y,$$
which induces an equivalence
$${\phi_f}_{\star}:f^{\star}\tilde PY\simeq(\tilde{\Omega}f)_{\star}(PX).$$
%%%We set $\tilde Pf$ to be the composite
%%%$$\tilde Pf:\xymatrix{\tilde PX\ar[r]^-{{\phi_f}_{\star}}&f^{\star}\tilde PY\ar[r]^-{\epsilon_f}&\tilde PY}.$$
By Proposition \ref{injectivityofpushforward}, the choice of $\tilde{\Omega}f$ is determined up to algebraic equivalence.
\end{choice}
\begin{prop}[Functoriality of the universal bundle over a fixed space]
\label{functorialityclassifyinggroup}
The classifying group $\tilde\Omega$ gives a pseudofunctorial assignment
$$\tilde{\Omega}:\mathcal T_{\star}\longrightarrow\mathcal G,$$
i.e., the following hold:
\begin{itemize}
\item \emph{Identity}: $\tilde{\Omega}\id_X\equiv\id_{\tilde{\Omega}X}$, for any $X\in\mathcal T_{\star}$;
\item \emph{Composition}: $\tilde{\Omega}(g\circ f)\equiv(\tilde{\Omega}g)\circ(\tilde{\Omega}f)$, for any $f:X\rightarrow Y$ and $g:Y\rightarrow Z$ in $\mathcal T_{\star}$.
\end{itemize}
\end{prop}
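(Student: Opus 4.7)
The plan is to reduce both statements to Proposition \ref{injectivityofpushforward}, which converts algebraic equivalence of homomorphisms into equivalence of the corresponding pushforward bundles. Concretely, to show that two homomorphisms $\tilde\Omega X\rightarrow G$ are algebraically equivalent, it suffices to produce an equivalence between the $G$-bundles they induce by pushing forward $\tilde PX$. Throughout, I will use freely the compatibility $a_{\star}\circ f^{\star}\cong f^{\star}\circ a_{\star}$ between pushforward and pullback recalled in Section \ref{preliminaries}, together with the defining property of Choice \ref{choiceOmegaonarrows}, namely $(\tilde\Omega f)_{\star}\tilde PX\simeq f^{\star}\tilde PY$.

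For the identity axiom, I would compute
$$(\tilde\Omega\id_X)_{\star}\tilde PX\simeq\id_X^{\star}\tilde PX\simeq\tilde PX=(\id_{\tilde\Omega X})_{\star}\tilde PX,$$
where the first equivalence uses Choice \ref{choiceOmegaonarrows} for $\id_X$, the second is trivial, and the last equality holds because pushing forward along the identity homomorphism is the identity functor on $_X\mathcal Mix_{\tilde\Omega X}$. Proposition \ref{injectivityofpushforward} then yields $\tilde\Omega\id_X\equiv\id_{\tilde\Omega X}$.

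For the composition axiom, given $f:X\rightarrow Y$ and $g:Y\rightarrow Z$ in $\mathcal T_{\star}$, I would chain equivalences of $\tilde\Omega Z$-bundles over $X$:
$$\bigl((\tilde\Omega g)\circ(\tilde\Omega f)\bigr)_{\star}\tilde PX\simeq(\tilde\Omega g)_{\star}(\tilde\Omega f)_{\star}\tilde PX\simeq(\tilde\Omega g)_{\star}f^{\star}\tilde PY\simeq f^{\star}(\tilde\Omega g)_{\star}\tilde PY\simeq f^{\star}g^{\star}\tilde PZ\simeq(g\circ f)^{\star}\tilde PZ\simeq\bigl(\tilde\Omega(g\circ f)\bigr)_{\star}\tilde PX.$$
Here the first equivalence is the functoriality of pushforward along composition of group morphisms, the second and fourth are instances of Choice \ref{choiceOmegaonarrows} (for $f$ and $g$ respectively), the third is the commutation of pullback and pushforward, the fifth is the functoriality of pullback, and the last is Choice \ref{choiceOmegaonarrows} applied to $g\circ f$. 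Applying Proposition \ref{injectivityofpushforward} once more to the resulting equivalence concludes that $\tilde\Omega(g\circ f)\equiv(\tilde\Omega g)\circ(\tilde\Omega f)$.

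The main subtlety, if any, lies in verifying that the pushforward--pullback commutation survives as a genuine equivalence of bundles in the category $_X\mathcal Mix_{\tilde\Omega Z}$, rather than merely an abstract natural isomorphism; but this is exactly what is asserted at the end of the \emph{Mixed modules} discussion, so the argument is formal once that compatibility is in hand. Everything else is a tautological unpacking of the choices made in defining $\tilde\Omega$ on arrows, so no new geometric input beyond Proposition \ref{injectivityofpushforward} and Choice \ref{choiceOmegaonarrows} is required.
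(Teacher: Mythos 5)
Your proof is correct and follows essentially the same route as the paper: both establish the chain of bundle equivalences $(\tilde{\Omega}(g\circ f))_{\star}\tilde PX\simeq(g\circ f)^{\star}\tilde PZ\simeq\cdots\simeq(\tilde{\Omega}g\circ\tilde{\Omega}f)_{\star}\tilde PX$ via Choice \ref{choiceOmegaonarrows} and the pushforward--pullback commutation, then conclude with Proposition \ref{injectivityofpushforward}. The only difference is that you write out the identity axiom explicitly, which the paper dismisses as ``analogous.''
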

\begin{proof}
The assignment on objects is well defined, thanks to Proposition \ref{classifyinggroupforCWcomplexes}. As for the functorial properties, use Choice \ref{choiceOmegaonarrows} to get
the following equivalences of bundles:
$$(\tilde{\Omega}(g\circ f))_{\star}\tilde PX\simeq(g\circ f)^{\star}\tilde{P}Z\simeq
f^{\star}g^{\star}\tilde{P}Z\simeq f^{\star}(\tilde{\Omega}g)_{\star}\tilde{P}Y\simeq$$
$$\simeq(\tilde{\Omega}g)_{\star}f^{\star}\tilde{P}Y\simeq(\tilde{\Omega}g)_{\star}(\tilde{\Omega}f)_{\star}\tilde{P}X=(\tilde{\Omega}g\circ\tilde{\Omega}f)_{\star}\tilde PX.$$
Thus, by Proposition \ref{injectivityofpushforward}, $\tilde{\Omega}(g\circ f)\simeq\tilde{\Omega}g\circ\tilde{\Omega}f$.
The argument for the unit is analogous.
\end{proof}
\begin{rem}[Naturality of the classification of bundles over a fixed base space]
\label{naturalitybijection2}
The bijection described in Theorem \ref{classificationbundlesoverX} is \textbf{natural} in both the variables, with respect to pointed continuous maps and continuous homomorphisms. Indeed, the following diagrams commute up to equivalence.
$$\xymatrix{X\ar[d]_-f&\mathcal Gp(\mathcal Top)(\tilde{\Omega}X,G)\ar[rr]^-{(-)_{\star}\tilde PX}&&_X\mathcal Bun_G\\
Y&\mathcal Gp(\mathcal Top)(\tilde{\Omega}Y,G)\ar[u]^-{-\circ\tilde{\Omega}f}\ar[rr]_-{(-)_{\star}\tilde PY}&&_Y\mathcal Bun_G\ar[u]_-{f^{\star}(-)}}$$
$$\xymatrix{G\ar[d]_-a&\mathcal Gp(\mathcal Top)(\tilde{\Omega}X,G)\ar[d]_-{a\circ-}\ar[rr]^-{(-)_{\star}\tilde PX}&&_X\mathcal Bun_G\ar[d]^-{a_{\star}(-)}\\
H&\mathcal Gp(\mathcal Top)(\tilde{\Omega}X,H)\ar[rr]_-{(-)_{\star}\tilde PX}&&_X\mathcal Bun_H}$$
\end{rem}
\begin{thm}[Looping-Delooping adjunction]
\label{adjunction}
For every $X\in\mathcal T_{\star}$ and $G\in\mathcal G$ there is a bijection
$$\mathcal G(\tilde\Omega X,G)/{\equiv}\longrightarrow\mathcal T_{\star}(X,\tilde BG)/{\simeq},$$
given by $a\mapsto\tilde Ba\circ\eta_X$. It is natural as explained in Remarks \ref{naturalitybijection1} and \ref{naturalitybijection2}.
In this sense Milnor's loop space pseudofunctor $\tilde\Omega$ and the classifying space functor $\tilde B$ form a \textbf{homotopy adjunction}
$$\tilde{\Omega}:\xymatrix{\mathcal T_{\star}\ar@<1ex>[r]&\mathcal G\ar@<1ex>[l]}:\tilde B.$$
\end{thm}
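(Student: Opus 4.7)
The plan is to exhibit the claimed correspondence $a \mapsto \tilde Ba \circ \eta_X$ as the composition of the two classification bijections already established. On one side, Theorem \ref{classificationbundlesoverX} provides a bijection $(-)_\star \tilde PX : \mathcal G(\tilde\Omega X, G)/{\equiv} \to {}_X\mathcal Bun_G/{\simeq}$; on the other, Theorem \ref{pointedclassificationGbundles} provides a bijection $(-)^\star \tilde EG : \mathcal T_\star(X, \tilde BG)/{\simeq} \to {}_X\mathcal Bun_G/{\simeq}$. Inverting the second and composing with the first yields a bijection of the asserted source and target, so the remaining work is to recognise this composite as the prescribed assignment.

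For that recognition I would reuse the chain of bundle equivalences appearing in the proof of Proposition \ref{injectivityofpushforward}. Combining Proposition \ref{naturality1} applied to $a$, the commutativity of pullback and pushforward on mixed modules recorded in Section \ref{preliminaries}, and the defining property of the unit from Choice \ref{unit}, one obtains
$$(\tilde Ba \circ \eta_X)^\star \tilde EG \simeq \eta_X^\star (\tilde Ba)^\star \tilde EG \simeq \eta_X^\star a_\star \tilde E\tilde\Omega X \simeq a_\star \eta_X^\star \tilde E\tilde\Omega X \simeq a_\star \tilde PX.$$
This identifies $(-)^\star \tilde EG$ applied to $\tilde Ba \circ \eta_X$ with $(-)_\star \tilde PX$ applied to $a$, so the triangle of candidate bijections commutes up to bundle equivalence. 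Consequently $a \mapsto \tilde Ba \circ \eta_X$ is well defined modulo $\equiv$ and $\simeq$ and is itself a bijection. Naturality then follows by assembling the two one-sided naturality statements: $(-)^\star \tilde EG$ is natural in $X$ and $G$ by Remark \ref{naturalitybijection1}, while $(-)_\star \tilde PX$ is natural by Remark \ref{naturalitybijection2}, the naturality in the space variable resting on Proposition \ref{functorialityclassifyinggroup} to make sense of $\tilde\Omega f$; a composite of natural bijections is natural.

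I do not expect a serious obstacle: given the previous sections, the theorem is essentially a formal gluing. The only mild care required is to keep track of the equivalence relations, which is automatic because Proposition \ref{injectivityofpushforward} certifies that $a_\star \tilde PX \simeq b_\star \tilde PX$ forces $a \equiv b$, and Proposition \ref{injectivityofpullback} does the analogous job on the mapping-space side. The concluding sentence identifying this as a looping--delooping \emph{homotopy adjunction} is then interpretive rather than substantive: it records that $\eta$ plays the role of a unit in the sense made precise by Proposition \ref{universalityunit}, and that the hom-set correspondence has the shape required by the Farjoun--Hess formalism of \cite{fh}.
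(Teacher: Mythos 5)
Your argument is correct, but it is organized differently from the paper's. The paper does not factor the map through ${}_X\mathcal Bun_G/_{\simeq}$: it establishes well-definedness and injectivity directly from the definition of algebraic equivalence ($a\equiv b$ means $\tilde Ba\simeq\tilde Bb$) together with the fact that $\eta_X$ is a homotopy equivalence, so that precomposition with $\eta_X$ is a bijection on homotopy classes of maps into $\tilde BG$; surjectivity is then quoted from Proposition \ref{universalityunit}. Your route instead composes the two classification bijections of Theorems \ref{classificationbundlesoverX} and \ref{pointedclassificationGbundles} and verifies that the resulting triangle commutes via the chain $(\tilde Ba\circ\eta_X)^{\star}\tilde EG\simeq a_{\star}\tilde PX$. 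The two arguments ultimately rest on the same computation --- that chain of equivalences is precisely the content of the proofs of Propositions \ref{universalityunit} and \ref{injectivityofpushforward} --- so no new input is required. What your version buys is that it makes explicit the factorization through bundle classes that the Main Theorem advertises, treating both classification theorems as black boxes; what the paper's version buys is a more economical injectivity step that isolates the purely homotopy-theoretic reason (invertibility of $\eta_X$ up to homotopy) without re-invoking bundle classification. Both are complete proofs.
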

\begin{proof}
Let $a,b:\tilde{\Omega}X\rightarrow G$ be continuous homomorphisms.
Since $\eta_X$ is a homotopy equivalence, we have the following equivalences
$$a\equiv b\Longleftrightarrow Ba\simeq Bb\Longleftrightarrow Ba\circ\eta_X\simeq Bb\circ\eta_X.$$
This shows that the assignment is well defined and injective. The surjectivity is a consequence of Proposition \ref{universalityunit}.
\end{proof}
\begin{prop}[Naturality of the unit]
\label{naturalityunit}
The unit is a \textbf{homotopy natural transformation}
$$\eta:\Id{}_{\mathcal T_{\star}}\longrightarrow\tilde B\tilde{\Omega},$$
i.e., for any $f:X\rightarrow Y$ in $\mathcal T_{\star}$, the following square commutes up to homotopy.
$$\xymatrix{X\ar[r]^-{\eta_X}\ar[d]_-{f}&\tilde B\tilde{\Omega}X\ar[d]^-{\tilde B\tilde{\Omega}f}\\
Y\ar[r]_-{\eta_Y}&\tilde B\tilde{\Omega}Y}$$
\end{prop}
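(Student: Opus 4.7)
The plan is to reduce the homotopy $\tilde B\tilde\Omega f \circ \eta_X \simeq \eta_Y \circ f$ to an equivalence of $\tilde\Omega Y$-bundles over $X$ via Proposition \ref{injectivityofpullback}. Concretely, I would show that both pointed maps $X \to \tilde B\tilde\Omega Y$ pull back the universal bundle $\tilde E\tilde\Omega Y$ to a $\tilde\Omega Y$-bundle equivalent to $f^\star \tilde P Y$, and then invoke the injectivity of pullback to deduce the homotopy.

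For the bottom-left composition, naturality of the pullback and Choice \ref{unit} applied to $Y$ give immediately
$$(\eta_Y\circ f)^\star\tilde E\tilde\Omega Y\simeq f^\star\eta_Y^\star\tilde E\tilde\Omega Y\simeq f^\star\tilde PY.$$
For the top-right composition, I would chain Proposition \ref{naturality1} applied to $\tilde\Omega f : \tilde\Omega X \to \tilde\Omega Y$, the commutation of pullback and pushforward from the section on mixed modules, Choice \ref{unit} applied to $X$, and finally Choice \ref{choiceOmegaonarrows} for $f$:
$$(\tilde B\tilde\Omega f\circ\eta_X)^\star\tilde E\tilde\Omega Y\simeq\eta_X^\star(\tilde B\tilde\Omega f)^\star\tilde E\tilde\Omega Y\simeq\eta_X^\star(\tilde\Omega f)_\star\tilde E\tilde\Omega X\simeq(\tilde\Omega f)_\star\eta_X^\star\tilde E\tilde\Omega X\simeq(\tilde\Omega f)_\star\tilde PX\simeq f^\star\tilde PY.$$
Comparing these two computations yields $(\tilde B\tilde\Omega f\circ\eta_X)^\star\tilde E\tilde\Omega Y\simeq(\eta_Y\circ f)^\star\tilde E\tilde\Omega Y$ as $\tilde\Omega Y$-bundles over $X$, and Proposition \ref{injectivityofpullback} then produces the desired homotopy.

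I do not anticipate a serious obstacle here: every step is a direct application of a result already available in the excerpt, so the proof is essentially a formal manipulation, in the style of the uniqueness argument at the end of Proposition \ref{universalityunit}. The only mild care required is to keep track of which category the equivalences live in (bundles with structure group $\tilde\Omega X$, $\tilde\Omega Y$, over $X$ or over $\tilde B\tilde\Omega X$), so that Proposition \ref{naturality1} and the commutation $a_\star\circ f^\star\cong f^\star\circ a_\star$ are applied on the correct side; once this bookkeeping is laid out, the chain of equivalences above is forced by the defining properties of the chosen unit and the chosen values of $\tilde\Omega$ on arrows.
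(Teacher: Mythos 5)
Your proposal is correct and is essentially identical to the paper's own proof: the paper writes the very same chain of equivalences (via Proposition \ref{naturality1}, the commutation of pullback and pushforward, Choice \ref{unit}, and Choice \ref{choiceOmegaonarrows}) as a single string from $(\tilde B\tilde{\Omega}f\circ\eta_X)^{\star}\tilde E(\tilde{\Omega}Y)$ to $(\eta_Y\circ f)^{\star}\tilde E(\tilde{\Omega}Y)$ passing through $f^{\star}\tilde PY$, and then invokes Proposition \ref{injectivityofpullback}. Splitting the chain into two computations meeting at $f^{\star}\tilde PY$ is only a cosmetic difference.
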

\begin{proof}
Using Proposition \ref{naturality1}, Choice \ref{unit} and Choice \ref{choiceOmegaonarrows},
we have the following equivalences of bundles:
$$(\tilde B\tilde{\Omega}f\circ\eta_X)^{\star}\tilde E(\tilde{\Omega}Y)\simeq\eta_X^{\star}(\tilde B\tilde{\Omega}f)^{\star}\tilde E(\tilde{\Omega}Y)\simeq\eta_X^{\star}(\tilde{\Omega}f)_{\star}\tilde E(\tilde{\Omega}X)\simeq$$
$$\simeq(\tilde{\Omega}f)_{\star}\eta_X^{\star}\tilde E(\tilde{\Omega}X)\simeq(\tilde{\Omega}f)_{\star}\tilde{P}X\simeq f^{\star}\tilde{P}Y\simeq f^{\star}\eta_Y^{\star}\tilde E(\tilde{\Omega}Y)\simeq(\eta_Y\circ f)^{\star}\tilde E(\tilde{\Omega}Y).$$
Thus, by Proposition \ref{injectivityofpullback}, $B\tilde{\Omega}f\circ\eta_X\simeq \eta_Y\circ f$.
\end{proof}
Let $G\in\mathcal G$. Then $\tilde EG$ is a $G$-bundle over $\tilde BG$. As a consequence of Proposition \ref{universalpropertyclassifyinggroup}, there exists a morphism $\tilde P\tilde BG\rightarrow \tilde EG$ of bundles over $\tilde BG$ inducing a homomorphism on the fibre $\tilde{\Omega}\tilde BG\rightarrow G$, whose pushforward of $\tilde P\tilde BG$ is equivalent to $\tilde EG$.
\begin{choice}[Counit]
\label{counit}
Let $G\in\mathcal G$. We choose a morphism of bundles over $\tilde BG$
$$\tilde P\tilde BG\longrightarrow\tilde EG,$$
and denote the restriction to the fibre
$$\epsilon_G:\tilde{\Omega}\tilde BG\longrightarrow G,$$
which induces an equivalence
$${\epsilon_G}_{\star}\tilde P(\tilde BG)\simeq\tilde EG.$$
\end{choice}
The collection $\epsilon$ that we just chose is the \textbf{counit} of the homotopy adjunction.
\begin{prop}[Universality of the counit]
For every $G\in\mathcal G$, the counit $\epsilon_G$ is \textbf{homotopy universal} among the homomorphisms of topological groups from Milnor's loop space $\tilde{\Omega}$ to $G$. In other words, for any continuous homomorphism $a:\tilde{\Omega}X\rightarrow G$ there exists a unique (up to homotopy) pointed continuous map $f:X\rightarrow\tilde BG$ such that $\epsilon_G\circ\tilde{\Omega}f\equiv a$.
$$\xymatrix{\tilde BG&\tilde{\Omega}\tilde BG\ar[r]^-{\epsilon_G}&G\\
X\ar@{-->}[u]^-f&\tilde{\Omega}X\ar[ru]_-a\ar[u]^-{\tilde{\Omega}f}&}$$
\end{prop}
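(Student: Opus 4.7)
The proof will mirror that of Proposition \ref{universalityunit}, which is its precise dual. Since the bijection of Theorem \ref{adjunction} sends $a \mapsto \tilde{B}a \circ \eta_X$, the natural candidate for the pointed map associated to $a : \tilde{\Omega}X \to G$ is $f := \tilde{B}a \circ \eta_X$. My plan is to verify that this $f$ satisfies $\epsilon_G \circ \tilde{\Omega}f \equiv a$ by translating the condition into an equivalence of associated bundles, and then to read off uniqueness from Proposition \ref{injectivityofpullback}.

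For existence, by Proposition \ref{injectivityofpushforward} it suffices to show $(\epsilon_G \circ \tilde{\Omega}f)_\star \tilde{P}X \simeq a_\star \tilde{P}X$. I would chase the left-hand side through a sequence of bundle equivalences:
\[(\epsilon_G \circ \tilde{\Omega}f)_\star \tilde{P}X \simeq (\epsilon_G)_\star (\tilde{\Omega}f)_\star \tilde{P}X \simeq (\epsilon_G)_\star f^\star \tilde{P}\tilde{B}G \simeq f^\star (\epsilon_G)_\star \tilde{P}\tilde{B}G \simeq f^\star \tilde{E}G,\]
invoking in turn functoriality of pushforward, Choice \ref{choiceOmegaonarrows}, the push-pull commutation established in the mixed modules paragraph of Section \ref{preliminaries}, and Choice \ref{counit}. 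Substituting $f = \tilde{B}a \circ \eta_X$, Proposition \ref{naturality1} together with Choice \ref{unit} yield
\[f^\star \tilde{E}G \simeq \eta_X^\star (\tilde{B}a)^\star \tilde{E}G \simeq \eta_X^\star a_\star \tilde{E}\tilde{\Omega}X \simeq a_\star \eta_X^\star \tilde{E}\tilde{\Omega}X \simeq a_\star \tilde{P}X,\]
which closes the loop and produces the desired equivalence.

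For uniqueness, suppose $f' : X \to \tilde{B}G$ is another pointed map with $\epsilon_G \circ \tilde{\Omega}f' \equiv a$. Running the first chain of equivalences with $f'$ in place of $f$ gives $(f')^\star \tilde{E}G \simeq a_\star \tilde{P}X \simeq f^\star \tilde{E}G$, so Proposition \ref{injectivityofpullback} forces $f' \simeq f$. The main obstacle is not conceptual but one of bookkeeping: each intermediate equivalence must be justified by the correct compatibility (functoriality of $(-)_\star$ and $(-)^\star$, Propositions \ref{naturality1} and \ref{universalpropertyclassifyinggroup}, and the two Choices \ref{unit} and \ref{counit}), and in particular one has to verify that the push-pull commutation applies cleanly to the composite pointed map $\tilde{B}a \circ \eta_X$. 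Once these verifications are in place the argument is entirely formal and is structurally dual to that of Proposition \ref{universalityunit}.
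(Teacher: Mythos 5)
Your argument is correct, and it diverges from the paper's in two places worth noting. For existence, the paper does not guess $f$ from the adjunction bijection: it produces $f$ abstractly by applying the universal property of the classifying space (Theorem \ref{universalpropertyclassifyingspace}) to the $G$-bundle $a_{\star}\tilde PX$, obtaining $f$ with $f^{\star}\tilde EG\simeq a_{\star}\tilde PX$, and then runs exactly your first chain $(\epsilon_G\circ\tilde{\Omega}f)_{\star}\tilde PX\simeq\cdots\simeq f^{\star}\tilde EG\simeq a_{\star}\tilde PX$; you instead take $f:=\tilde Ba\circ\eta_X$ explicitly and need the extra chain through Proposition \ref{naturality1} and Choice \ref{unit} to identify $f^{\star}\tilde EG$ with $a_{\star}\tilde PX$ --- that extra chain is precisely the one already verified in the proof of Proposition \ref{injectivityofpushforward}, so nothing is at risk. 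For uniqueness the routes genuinely differ: the paper first establishes the triangle identity $\tilde B\epsilon_G\circ\eta_{\tilde BG}\simeq\id_{\tilde BG}$ and then uses the naturality of the unit (Proposition \ref{naturalityunit}) to show any admissible $f$ satisfies $f\simeq\tilde Ba\circ\eta_X$, whereas you stay at the level of bundles and conclude from $(f')^{\star}\tilde EG\simeq a_{\star}\tilde PX\simeq f^{\star}\tilde EG$ via Proposition \ref{injectivityofpullback} (using the forward implication of Proposition \ref{injectivityofpushforward} to pass from $\epsilon_G\circ\tilde{\Omega}f'\equiv a$ to the bundle equivalence --- state this step explicitly). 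Your version is more economical and avoids invoking Proposition \ref{naturalityunit}; the paper's version has the side benefit of exhibiting a triangle identity of the homotopy adjunction. The only bookkeeping point to watch in your construction is that $f=\tilde Ba\circ\eta_X$ must be pointed, which holds because $\tilde B$ lands in pointed spaces and $\eta_X$ is taken pointed, the same convention the paper relies on in Theorem \ref{adjunction}.
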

\begin{proof}
The pushforward $a_{\star}\tilde PX$ is a $G$-bundle over $X$. Thus, by Theorem \ref{universalpropertyclassifyingspace}, there exists $f:X\rightarrow\tilde BG$ such that $f^{\star}\tilde EG\simeq a_{\star}\tilde PX$. Using Choice \ref{choiceOmegaonarrows} and Choice \ref{counit},
we have the following equivalences of bundles:
$$(\epsilon_G\circ\tilde{\Omega}f)_{\star}\tilde PX\simeq{\epsilon_G}_{\star}\tilde{\Omega}f_{\star}\tilde PX\simeq {\epsilon_G}_{\star}f^{\star}\tilde P(\tilde BG)\simeq f^{\star}{\epsilon_G}_{\star}\tilde P(\tilde BG)\simeq f^{\star}\tilde EG\simeq a_{\star}\tilde PX.$$
Thus, by Proposition \ref{injectivityofpushforward}, $\epsilon_G\circ\tilde{\Omega}f\simeq a$.

For the uniqueness, we first remark that $\tilde B\epsilon_G\circ\eta_{\tilde BG}\simeq\id_{\tilde BG}$. Indeed,
$$(\tilde B\epsilon_G\circ\eta_{\tilde BG})^{\star}\tilde EG\simeq
(\eta_{\tilde BG})^{\star}(\tilde B\epsilon_G)^{\star}\tilde EG\simeq
(\eta_{\tilde BG})^{\star}(\epsilon_G)_{\star}\tilde E(\tilde{\Omega}\tilde BG)\simeq$$
$$\simeq(\epsilon_G)_{\star}(\eta_{\tilde BG})^{\star}\tilde E(\tilde{\Omega}\tilde BG)\simeq(\epsilon_G)_{\star}\tilde P(\tilde BG)\simeq\tilde EG\simeq\id{}_{\tilde BG}^{\star}\tilde EG.$$
Using this identity and the naturality of the unit, we see that $f$ is determined by $a$ as follows:
$$f=\id{}_{\tilde BG}\circ f\simeq\tilde B\epsilon_G\circ\eta_{\tilde BG}\circ f\simeq\tilde B\epsilon_G\circ\tilde B\tilde{\Omega}f\circ\eta_{X}=\tilde B(\epsilon_G\circ\tilde{\Omega}f)\circ\eta_X\simeq Ba\circ \eta_X,$$
and therefore $f$ is unique up to homotopy.
\end{proof}
\begin{prop}[Naturality of the counit]
\label{naturalitycounit}
The counit is a \textbf{homotopy natural transformation}
$$\epsilon:\tilde{\Omega}\tilde B\longrightarrow\Id{}_{\mathcal G},$$
i.e., for any
%$f:X\longrightarrow Y$ in $|\mathcal C_{\star}|$ and 
$a:G\rightarrow H$ in $\mathcal G$, the following square commutes up to algebraic equivalence.
$$\xymatrix{\tilde{\Omega}\tilde BG\ar[r]^-{\epsilon_G}\ar[d]_-{\tilde{\Omega}\tilde Ba}&G\ar[d]^-a\\\tilde{\Omega}\tilde BH\ar[r]_-{\epsilon_H}&H}$$
\end{prop}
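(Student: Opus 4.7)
The plan is to mimic the proof of the naturality of the unit (Proposition \ref{naturalityunit}), replacing the roles of pullback/pushforward and invoking Proposition \ref{injectivityofpushforward} instead of Proposition \ref{injectivityofpullback}. Concretely, to show that $a \circ \epsilon_G \equiv \epsilon_H \circ \tilde{\Omega}\tilde Ba$ as continuous homomorphisms $\tilde{\Omega}\tilde BG \to H$, it suffices by Proposition \ref{injectivityofpushforward} to exhibit an equivalence of $H$-bundles over $\tilde BG$
$$(a\circ\epsilon_G)_{\star}\tilde P(\tilde BG)\simeq(\epsilon_H\circ\tilde{\Omega}\tilde Ba)_{\star}\tilde P(\tilde BG).$$

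I would then compute both sides using the bundle calculus. For the left-hand side, I apply functoriality of pushforward and Choice \ref{counit}:
$$(a\circ\epsilon_G)_{\star}\tilde P(\tilde BG)\simeq a_{\star}(\epsilon_G)_{\star}\tilde P(\tilde BG)\simeq a_{\star}\tilde EG.$$
For the right-hand side, I use Choice \ref{choiceOmegaonarrows} applied to the map $\tilde Ba:\tilde BG\to\tilde BH$, then the commutation of pullback and pushforward (as recalled in the Mixed modules section), and Choice \ref{counit}:
$$(\epsilon_H\circ\tilde{\Omega}\tilde Ba)_{\star}\tilde P(\tilde BG)\simeq(\epsilon_H)_{\star}(\tilde{\Omega}\tilde Ba)_{\star}\tilde P(\tilde BG)\simeq(\epsilon_H)_{\star}(\tilde Ba)^{\star}\tilde P(\tilde BH)\simeq(\tilde Ba)^{\star}(\epsilon_H)_{\star}\tilde P(\tilde BH)\simeq(\tilde Ba)^{\star}\tilde EH.$$
The two sides are then identified via Proposition \ref{naturality1}, which gives precisely $a_{\star}\tilde EG\simeq(\tilde Ba)^{\star}\tilde EH$.

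Assembling the chain of equivalences produces the desired equivalence of $H$-bundles over $\tilde BG$, and an application of Proposition \ref{injectivityofpushforward} concludes that $a\circ\epsilon_G\equiv\epsilon_H\circ\tilde{\Omega}\tilde Ba$. There is no real obstacle here: all ingredients (Choices \ref{counit} and \ref{choiceOmegaonarrows}, the commutation $a_{\star}\circ f^{\star}\cong f^{\star}\circ a_{\star}$, Proposition \ref{naturality1}, and Proposition \ref{injectivityofpushforward}) are already established, and the argument is formally dual to the one for the unit. The only thing to be mildly careful about is the direction in which the bundle equivalences compose and the correct variance of the indices in applying Choice \ref{choiceOmegaonarrows} to $\tilde Ba$, but this is bookkeeping rather than a genuine difficulty.
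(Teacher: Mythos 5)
Your proposal is correct and follows essentially the same route as the paper: the same chain of bundle equivalences (functoriality of pushforward, Choice \ref{choiceOmegaonarrows} applied to $\tilde Ba$, the commutation $a_{\star}\circ f^{\star}\cong f^{\star}\circ a_{\star}$, Choice \ref{counit}, and Proposition \ref{naturality1}), concluded by Proposition \ref{injectivityofpushforward}. The only difference is presentational --- you compute the two sides separately and glue them with Proposition \ref{naturality1}, whereas the paper writes a single chain of equivalences.
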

\begin{proof}
Using Choice \ref{choiceOmegaonarrows}, Choice \ref{counit}, and Proposition \ref{naturality1},
we have the following equivalences of bundles:
$$(\epsilon_H\circ\tilde{\Omega}Ba)_{\star}\tilde{P}(\tilde BG)\simeq{\epsilon_H}_{\star}(\tilde{\Omega}Ba)_{\star}\tilde{P}(\tilde BG)\simeq{\epsilon_H}_{\star}(Ba)^{\star}\tilde{P}(\tilde BH)\simeq$$
$$\simeq(\tilde Ba)^{\star}{\epsilon_H}_{\star}\tilde{P}(\tilde BH)\simeq(\tilde Ba)^{\star}\tilde EH\simeq a_{\star}\tilde EG\simeq a_{\star}{\epsilon_G}_{\star}\tilde{P}(\tilde BG)\simeq(a\circ\epsilon_G)_{\star}\tilde{P}(\tilde BG).$$
Thus, by Proposition \ref{injectivityofpushforward}, $\epsilon_H\circ\tilde{\Omega}Ba\equiv a\circ\epsilon_G$.
\end{proof}
Thanks to Proposition \ref{uniquenessclassifyingspace}(2), every contractible $G$-bundle gives a classification of $G$-bundles. Moreover, the specific model that Milnor suggests for the classifying space is not special; equivalent results hold for any model of the classifying space.

In the dual picture, having a contractible bundle over a space does not guarantee (a priori) any universal property among bundles over the same space.
\begin{prop}[Uniqueness of the classifying group]
\label{uniquenessclassifyinggroup}
%%%The classifying group of $X\in\mathcal T_{\star}$ is unique up to \emph{Milnor's equivalence}, and the group that classifies bundles over $X$ is determined up to \emph{equivalence}. More precisely, 
Let $X\in\mathcal T_{\star}$.\\
For $\Omega\in\mathcal G$ the following are equivalent.
\begin{itemize}
	\item[(1)] \emph{Equivalence}: $\Omega$ is \textbf{equivalent} to $\tilde{\Omega}X$, i.e., there exist continuous homomorphisms
$$\xymatrix{\Omega\ar@<.7ex>[r]^-b&\tilde{\Omega}X\ar@<.7ex>[l]^-a}$$
such that $a\circ b\equiv\id_{\Omega}$ and $b\circ a\equiv\id_{\tilde{\Omega}X}$;
	\item[(2)] \emph{Universal property}: there exists a natural bijection
	$$\mathcal G(\Omega,G)/_{\equiv}\longrightarrow{{}_X\mathcal Bun_G}/_{\simeq}$$
between the equivalence classes of continuous homomorphisms $\Omega\rightarrow G$ and the equivalence classes of $G$-bundles over $X$, for every $G$.
\end{itemize}
Moreover, (1) and (2) imply the following equivalent conditions.
\begin{itemize}
	\item[(3)] \emph{Contractibility}: there exists an $\Omega$-bundle $P$ over $X$ that is contractible;
	\item[(1')] \emph{Milnor's equivalence}:
%%%	$\Omega$ is \textbf{Milnor equivalent} to $\tilde{\Omega}X$, i.e., 
there exists a nice group $A\in\mathcal G$ and continuous homomorphisms that are homotopy equivalences
$$\xymatrix{\Omega&A\ar[r]^-{\simeq}\ar[l]_-{\simeq}&\tilde{\Omega}X}.$$ 
\end{itemize}
\end{prop}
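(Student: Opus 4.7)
The plan is to prove $(1)\Leftrightarrow(2)$ by combining Theorem \ref{classificationbundlesoverX} with a Yoneda-style identification of the bijection in (2), and then to deduce the block $(1)\Rightarrow(1')\Leftrightarrow(3)$ via the long exact sequences of the contractible fibrations $\tilde EG\to\tilde BG$ and of contractible bundles over $X$. Throughout, the convenient bridge is to take $A:=\tilde\Omega X$ in (1'), with one of the comparison homomorphisms equal to the identity.

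For $(1)\Rightarrow(2)$, precomposition with $b$ (respectively $a$) gives a mutually inverse pair of bijections $\mathcal G(\tilde\Omega X,G)/_{\equiv}\leftrightarrow \mathcal G(\Omega,G)/_{\equiv}$ (well-definedness follows from $a\equiv a'\Rightarrow a\circ b\equiv a'\circ b$), and composing with Theorem \ref{classificationbundlesoverX} yields the desired bijection, with naturality inherited from Remark \ref{naturalitybijection2}. For $(2)\Rightarrow(1)$, Yoneda applied to the natural-in-$G$ bijection in (2) forces it to have the form $[c]\mapsto[c_{\star}Q]$ for $Q$ the $\Omega$-bundle over $X$ classified by $[\id_{\Omega}]$. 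Surjectivity at $G:=\tilde\Omega X$ yields $b:\Omega\to\tilde\Omega X$ with $b_{\star}Q\simeq\tilde PX$, while Theorem \ref{classificationbundlesoverX} applied to $Q$ yields $a:\tilde\Omega X\to\Omega$ with $a_{\star}\tilde PX\simeq Q$. The equivalences $(a\circ b)_{\star}Q\simeq a_{\star}\tilde PX\simeq Q$ and $(b\circ a)_{\star}\tilde PX\simeq b_{\star}Q\simeq\tilde PX$, combined with the injectivity of the bijection of (2) and of Theorem \ref{classificationbundlesoverX} (via Proposition \ref{injectivityofpushforward}) respectively, yield $a\circ b\equiv\id_\Omega$ and $b\circ a\equiv\id_{\tilde\Omega X}$.

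For $(1)\Rightarrow(1')$, take $A:=\tilde\Omega X$ with the homomorphisms $\id$ and $a$. From $(1)$, $\tilde Ba$ and $\tilde Bb$ are mutually inverse homotopy equivalences of classifying spaces, and the long exact sequence of $\tilde EG\to\tilde BG$ together with contractibility of $\tilde EG$ gives $\pi_n(\tilde BG)\cong\pi_{n-1}(G)$, so $a$ induces isomorphisms on all homotopy groups; as $\tilde\Omega X,\Omega\in\mathcal G$ are CW, $a$ is a homotopy equivalence. For $(1')\Rightarrow(3)$, compose $\eta_X$ with a homotopy inverse of $\tilde Ba$ and with $\tilde Bb$ to produce a weak equivalence $f:X\to\tilde B\Omega$, and set $P:=f^{\star}\tilde E\Omega$; the map from the LES of $P\to X$ into that of $\tilde E\Omega\to\tilde B\Omega$ makes the connecting morphism $\pi_n(X)\to\pi_{n-1}(\Omega)$ an isomorphism, so $P$ is weakly contractible. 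For $(3)\Rightarrow(1')$, Proposition \ref{universalpropertyclassifyinggroup} applied to the contractible $P$ produces $a:\tilde\Omega X\to\Omega$ with $a_{\star}\tilde PX\simeq P$, realized by an equivariant map $\tilde PX\to P$ over $X$; comparing the LES of $\tilde PX\to X$ and $P\to X$ (both with contractible total space) forces $a_{\ast}$ to be an isomorphism on all homotopy groups, and we again take $A:=\tilde\Omega X$ with $\id$ and $a$.

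\textbf{Main obstacle.} The only genuinely non-formal moment is the Yoneda identification in $(2)\Rightarrow(1)$: once the bijection in (2) is expressed as $[c]\mapsto[c_{\star}Q]$ for a single $\Omega$-bundle $Q$, the two halves of the equivalence $(1)$ drop out from injectivity applied to $Q$ and to $\tilde PX$ respectively. The comparison-of-LES arguments underpinning $(1)\Rightarrow(1')\Leftrightarrow(3)$ are otherwise routine, though they do rely on the CW-hypothesis in $\mathcal G$ to promote weak equivalences to honest homotopy equivalences.
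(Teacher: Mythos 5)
Your proof is correct and follows the same skeleton as the paper's: the equivalence $(1)\Leftrightarrow(2)$ is the Yoneda Lemma applied to the representable functor $G\mapsto{}_X\mathcal Bun_G/_{\simeq}$ on $\mathcal G/_{\equiv}$ (which you simply unpack explicitly, using Theorem \ref{classificationbundlesoverX} and the two injectivity statements), and $(1)\Rightarrow(1')$ is realized by taking $A:=\tilde\Omega X$ with the identity and $a$. The two places where you genuinely add content are welcome: the paper asserts without comment that the $a$ from $(1)$ can serve in $(1')$, whereas you justify that it is a homotopy equivalence via the natural isomorphism $\pi_n(\tilde BG)\cong\pi_{n-1}(G)$ and Whitehead's theorem for CW-groups; and for $(1')\Leftrightarrow(3)$ the paper just cites Milnor's Theorem 5.2(4), while you give a self-contained comparison-of-long-exact-sequences argument. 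One small point worth making explicit in your Yoneda step for $(2)\Rightarrow(1)$: to know that pushforward descends to $\mathcal G/_{\equiv}$ for an arbitrary $\Omega$-bundle $Q$ (not just $\tilde PX$), first write $Q\simeq d_{\star}\tilde PX$ via Proposition \ref{universalpropertyclassifyinggroup} and then invoke Proposition \ref{injectivityofpushforward}; your argument as written only needs the naturality square chased at $[\id_{\Omega}]$, so this is a cosmetic rather than substantive gap.
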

\begin{proof}
\begin{itemize}
\item $[(1)\Longleftrightarrow(2)]$: It an application  of the Yoneda Lemma. Indeed, $\tilde{\Omega}X$ represents the functor $G\mapsto_X\mathcal{B}un_G/_{\simeq}$ on the category $\mathcal G$ up to equivalence.
\item $[(1')\Longleftrightarrow(3)]$: See \cite[Theorem 5.2.(4)]{milnor2}.
\item $[(1)\Longrightarrow(1')]$: It is enough to take
	$$\xymatrix{\Omega&\tilde{\Omega}X\ar@{=}[r]\ar[l]_-{\simeq}^-a&\tilde{\Omega}X}.$$ 
\end{itemize}
\end{proof}

\bibliography{references}
\bibliographystyle{amsalpha}
\end{document}